%
%
%
%
\RequirePackage{fix-cm}
\documentclass[smallextended,numbook]{svjour3}          
\smartqed  
\usepackage{graphicx}
\usepackage{latexsym}
\usepackage{amssymb}
\usepackage{amsmath}
\usepackage{relsize}
\usepackage{cite}

\newcommand{\nc}{\newcommand}
\nc{\nt}{\newtheorem}
\nt{thm}{Theorem}[section]
\nt{cor}[thm]{Corollary}
\nt{prop}[thm]{Proposition}
\nt{obs}[thm]{Observation}
\nt{lem}[thm]{Lemma}
\nt{defn}[thm]{Definition}
\nt{exa}[thm]{Example}
\nt{rem}[thm]{Remark}
\nt{ass}[thm]{Assumption}
\nt{alg}[thm]{Algorithm}
\nt{con}[thm]{Conjecture}
\nc{\ip}[2]{\mbox{$\langle #1,#2 \rangle$}}
\nc{\pf}{\noindent{\bf Proof\ \ }}
\nc{\finpf}{\hfill{$\Box$}\linespace}
\nc{\linespace}{\vspace{\baselineskip} \noindent}
\nc{\R}{{\bf R}}
\nc{\tto}{\rightrightarrows}
\nc{\cl}{\mbox{\rm cl}\,}
\nc{\cls}{ \mbox{{\scriptsize {\rm cl}}}\,}
\nc{\conv}{\mbox{\rm conv}}
\nc{\rb}{\mbox{\rm rb}\,}
\nc{\ri}{\mbox{\rm ri}\,}
\nc{\inter}{\mbox{\rm int}\,}
\nc{\kernel}{\mbox{\rm ker}\,}
\nc{\bd}{\mbox{\rm bd}\,}
\nc{\spann}{\mbox{\rm span}\,}
\nc{\rint}{\mbox{\rm rint}\,}
\nc{\epi}{\mbox{\rm epi}\,}
\nc{\gph}{\mbox{\rm gph}\,}
\nc{\rge}{\mbox{\rm rge}\,}
\nc{\rgel}{\mbox{\rm {\scriptsize rge}}\,}
\nc{\sepi}{\mbox{\rm {\scriptsize epi}}\,}
\nc{\sbd}{\mbox{\rm {\scriptsize bd}}\,}
\nc{\dom}{\mbox{\rm dom}\,}
\nc{\lin}{\mbox{\rm lin}\,}
\nc{\detr}{\mbox{\rm det}\,}
\nc{\para}{\mbox{\rm par}\,}
\nc{\crit}{\mbox{\rm crit}\,}
\nc{\cone}{\mbox{\rm cone}\,}
\nc{\diag}{\mbox{\rm Diag}\,}
\nc{\fix}{\mbox{\rm Fix}}
\nc{\rank}{\mbox{\rm rank}\,}
\nc{\argmin}{\mbox{\rm argmin}\,}

\def\ox{\bar{x}}

\def\oy{\bar{y}}

\def\sd{\partial}
\def\xyb{(\bar x,\bar y)}

\def\kk{\kappa}

\def\ran{\rangle}
\def\lan{\langle}

\def\Pi{{\cal P}}
\def\B{{\cal B}}

\def\X{{\mathcal X}}

\def\la{\lambda}
\def\al{\alpha}

\def\ep{\varepsilon}
\def\sig
\def\sd{\partial}
\def\xb{\bar{x}}

\def\yb{\bar{y}}

\def\dom{{\rm dom}~}

\def\rra{\rightrightarrows}

\def\ep{\varepsilon}
\def\la{\lambda}

\def\al{\alpha}

\def\vf{\varphi}

\def\sig{\sigma}

\def\cl{{\rm cl}}
\def\cone{{\rm cone}}
\def\dom{{\rm dom}~}
\def\epi{{\rm epi}~}

\def\bd{{\rm bd}}

\def\conv{{\rm conv}~}
\def\ri{{\rm ri}~}
\def\ran{\rangle}
\def\lan{\langle}

\def\rank{{\rm rank}~}
\def\crit{\rm Crit}

\newcommand{\lf}{\operatornamewithlimits{liminf}}
\newcommand{\ls}{\operatornamewithlimits{limsup}}

%
%
%
%
%
\begin{document}

\title{Quadratic growth and critical point stability of semi-algebraic functions\thanks{Research of A.D. Ioffe was supported in part by the US-Israel Binational Science Foundation Grant 2008261. Work of Dmitriy Drusvyatskiy on this paper has also been partially supported by the US-Israel Binational Science Foundation Travel Grant for Young Scientists. }
}


\author{D. Drusvyatskiy         \and
        A.D. Ioffe 
}


\institute{D. Drusvyatskiy \at
    Department of Mathematics, University of Washington, Seattle, WA 98195; \\
              Tel.: (206) 543-1150\\
              Fax: (206) 543-0397\\
              \email{ddrusv@uw.edu}\\
              {\tt math.washington.edu/{\raise.17ex\hbox{$\scriptstyle\sim$}}ddrusv}.          
           \and
           A.D. Ioffe \at
              Department of Mathematics, Technion-Israel Institute of Technology, Haifa, Israel 32000;\\
Tel:972-4-8294099\\
Fax:972-4-8293388\\
\email{ioffe@tx.technion.ac.il}\\
{\tt http://www.math.technion.ac.il/Site/people/process.php?id=672}
}

\date{Received: date / Accepted: date}

\maketitle

\begin{abstract}
We show that quadratic growth of a semi-algebraic function is equivalent to strong metric subregularity of the subdifferential --- a kind of stability of generalized critical points. In contrast, this equivalence can easily fail outside of the semi-algebraic setting. 
As a consequence, we derive necessary conditions and sufficient conditions for optimality in subdifferential terms. 
\keywords{subdifferentials \and quadratic growth \and   subregularity \and semi-algebraic}
 \subclass{49J53\and 14P10 \and 54C60 \and 65K10 \and 90C31 \and 49J52}
\end{abstract}

\section{Introduction}
Quadratic growth conditions of extended-real-valued functions play a central role in nonlinear optimization, both for convergence analysis of algorithms and for perturbation theory. See for example \cite{Bon_Shap, rusz, NW}. Classically, a point $\bar{x}$ is a {\em strong local minimizer} of a function $f$ on $\R^n$ if there exists a constant $\alpha >0$ and a neighborhood $U$ of $\bar{x}$ such that the inequality 
\begin{equation}\label{eqn:qgrowth}
f(x)\geq f(\bar{x})+\alpha \|x-\bar{x}\|^2 \quad \textrm{ holds for all } x\in U.
\end{equation}
Here $\|\cdot\|$ denotes the Euclidean norm on $\R^n$. For $C^2$-smooth functions $f$ this condition simply amounts to requiring $\bar{x}$ to be a critical point with the Hessian $\nabla^2 f(\bar{x})$ being positive definite. 

What condition would then entail second order growth of the function near a minimizer, when derivatives do not exist? In a seminal paper \cite{AG}, Arag\'on Artacho and Geoffroy gave an elegant and rather unexpected answer to this question for closed convex functions:  a minimizer $\bar{x}$ of a closed convex function $f$ is a strong local minimizer if and only if the convex subdifferential $\partial f$ is {\em strongly subregular} at $(\bar{x},0)$. The latter  means that there exists a constant $\kappa \geq 0$ and a neighborhood $V$ of $\bar{x}$ so that the inequality
\begin{equation}\label{eqn:strong_met_sub}
\|x-\bar{x}\|\leq \kappa\cdot d\big(0;\partial f(x)\big) \quad \textrm{ holds for all } x\in V,
\end{equation}
where $d\big(0;\partial f(x)\big)$ denotes the minimal norm of subgradients $v\in\partial f(x)$.
Hence $\bar{x}$ being a strong minimizer amounts to requiring existence of a linear bound on the distance of a putative solution $x$ to $\bar{x}$ in terms of the ``distance to criticality'' $d\big(0;\partial f(x)\big)$ --- an appealing property. 




In the current work, we aim to address the question:
to what extent is such a transparent and mathematically rigorous interpretation of quadratic growth true more generally beyond the convex setting? In this case, naturally one replaces the convex subdifferential with the limiting subdifferential in the sense of variational analysis; see for example \cite{VA}.
A partial result was obtained in \cite{AG_infin,Dima_Ng}, where the authors showed that given a local minimizer $\bar{x}$ of a closed function $f$, if the subdifferential $\partial f$ is strongly subregular at $(\bar{x},0)$, then $\bar{x}$ is indeed a strong local minimizer.
On the other hand, the converse may easily fail, in contrast to both the convex and smooth situations; case in point,  for $f(x):=2x^2+\frac{1}{2}x^2\sin\big(\frac{1}{x}\big)$. See Example~\ref{ex:fail} for more details. Such badly behaved functions, however, rarely appear in practice. The usual way to eliminate such functions from consideration is to restrict attention to favorable function classes, such as to those that are {\em amenable}~\cite{amen} or {\em prox-regular}~\cite{prox_reg}. 

In the current work, we take a different approach. We consider prototypical nonpathological functions, namely those that are {\em semi-algebraic} --- meaning that their epigraphs are composed of finitely many sets, each defined by finitely many polynomial inequalities. See \cite{Coste-semi, BCR} for more on semi-algebraic geometry. This class subsumes neither the case of convex nor the case of $C^2$-smooth functions. Nevertheless, it has great appeal. Semi-algebraic functions are common, they are easy to recognize (via the Tarski-Seidenberg Theorem on quantifier elimination), and in sharp contrast to the usual favorable function classes of variational analysis (e.g. prox-regular), semi-algebraic functions do not need to be Clarke-regular\cite{VA}. For a discussion on the role of semi-algebraic geometry in nonsmooth optimization, see \cite{tame_opt}. Our principle result (described in Theorem~\ref{thm:mainres}\footnote{This theorem was proved in early 2012, and presented in June 2012 at the conference ``Constructive Nonsmooth Analysis and Related Topics'' in Saint Petersburg, Russia, but we had to delay further work over this subject as there were two more manuscripts (in a more advanced
state) that demanded our attention.  Shortly after that, the first author in conversations with Nghia became aware of the very recent results of Mordukhovich-Nghia leading to \cite{Dima_Ng}.}) shows that for semi-algebraic functions satisfying a minimal continuity condition, strong local minimality and strong subregularity of the subdifferential at local minimizers are equivalent properties. 

\parskip=0pt
 Our development, in turn, immediately leads to another question as there is a substantial difference between applications of the convex and nonconvex results. Indeed, in the convex case strong minimality is characterized in terms of the subdifferential alone: $x$ is a strong local minimizer if and only if $0\in\partial f(\bar{x})$ and $\partial f$ is strongly subregular at $(\bar{x},0)$.
In the nonconvex semi-algebraic case the replacement of the assumption that $\bar{x}$ is
a local minimizer by the condition $0\in \partial f(\bar{x})$ is obviously impossible (consider for instance $f(x) = x|x|$).
So the question is whether it is possible to characterize \eqref{eqn:qgrowth} in purely subdifferential
terms, that is without having to assume that the point in question is already a local minimizer. In the smooth situation, one arrives at the standard second-order optimality conditions. Following this trend of thought, one might be led to a second-order subdifferential in the formulation of analogous conditions; see \cite[Theorem 1.3]{tilt}. This however leads to conditions characterizing ``strong metric regularity'' rather than the weaker subregularity concept.
In Theorem~\ref{thm:nec_suff}, we develop necessary conditions
and sufficient condition for \eqref{eqn:qgrowth} to hold, based only on first-order subdifferential considerations. These turn out to be more precise than conditions based on second-order subdifferentials. See Section~\ref{sec:nec_suf} for more details.
As in the smooth case, there is a gap between the conditions which however is small enough to contain the classical conditions as a special case. 

To be concrete, we state our results for semi-algebraic functions.  Analogous results, with essentially identical proofs, hold for functions definable in an ``o-minimal structure'' and, more generally, for ``tame'' functions. See \cite{DM} for more on the subject. The outline of the manuscript is as follows. In Section~\ref{sec:prelim}, we record basic elements of variational analysis and semi-algebraic geometry that we will need throughout the manuscript. In Section~\ref{sec:main}, we prove that in the semi-algebraic setting strong metric subregularity of the subdifferential at a local minimizer is equivalent to strong local minimality. In Section~\ref{sec:imply}, we elucidate an argument of \cite[Corollary 3.2]{Dima_Ng} showing that strong subregularity of the subdifferential at a local minimizer, for virtually any subdifferential of interest, implies a quadratic growth condition.
In Section~\ref{sec:nec_suf}, we discuss necessary and sufficient conditions for optimality.

\section{Preliminaries}\label{sec:prelim}
\subsection{Some elements of Variational Analysis}
In this subsection, we summarize some of the fundamental tools used in variational analysis and nonsmooth optimization.
We refer the reader to the monographs \cite{Borwein-Zhu,CLSW, Mord_1, penot_book, VA} for more details.  Unless otherwise stated, we follow the terminology and notation of \cite{VA}.

Throughout $\R^n$ will denote the $n$-dimensional Euclidean space, with the inner-product written as $\langle\cdot,\cdot \rangle$. We will denote the induced norm by $\|\cdot\|$. The closed ball centered at $x\in \R^n$ of radius $r$ will be denoted by $\B_{r}(x)$ while the closed unit ball centered at the origin will be denoted by $\B$. 
The {\em distance } of a point $x\in\R^n$ to a set $Q\subset \R^n$ is the quantity
$$
d(x; Q):=\inf_{y\in Q} \|x-y\|,
$$
with the convention $d(x,\emptyset)=+\infty$.
A set-valued mapping $F\colon \R^n \tto \R^m$ is a mapping assigning to each point $x\in \R^n$ a subset $F(x)\subset \R^m$. The {\em domain} and {\em graph} of $F$ are defined by
\begin{align*}
\dom F &:=\big\{x\in \R^n :F(x)\ne\emptyset\big\}, \\ 
\gph F &:=\big\{(x,y)\in \R^n\times \R^m : y\in F(x)\big\},
\end{align*}
respectively. 

\begin{defn}[Strong metric subregularity]\label{mr-def} {\hfill \\}{\rm Consider a mapping $F\colon \R^n\tto \R^m$ and a pair $(\ox,\oy)\in \gph F$. We say that $F$ is {\em strongly metrically subregular} at $(\ox,\oy)$ {\em with modulus} $\kappa \geq 0$ if there is a real number $\ep > 0$ such that the inequality
\begin{equation*}\label{ms}
\|x-\bar{x}\|\le\kk d\big(\oy;F(x)\big)\quad\quad\mbox{ holds for all}\quad x\in\B_{\ep}(\bar{x}).
\end{equation*}
}
\end{defn}

Observe that in particular, strong metric subregularity of $F\colon \R^n\tto \R^m$ at $(\ox,\oy)$ implies that $\bar{x}$ is a locally isolated point of $F^{-1}(\bar{y})$. 
Given a set $Q\subset\R^n$ and a point $\bar{x}\in Q$, we will denote by ``$o(\|x-\bar{x}\|) \textrm{ for }x\in Q$'' a term with the property that 
$$\frac{o(\|x-\bar{x}\|)}{\|x-\bar{x}\|}\rightarrow 0\qquad \textrm{when } x\stackrel{Q}{\rightarrow} \bar{x} \textrm{ with }x\neq\bar{x}.$$

\begin{defn}[Normal cones]
{\rm Consider a set $Q\subset\R^n$ and a point $\bar{x}\in Q$. 
\begin{itemize}
\item The {\em Fr\'{e}chet normal cone} to $Q$ at $\bar x$, denoted 
$\hat N_Q(\bar x)$, consists of all vectors $v \in \R^n$ satisfying $$\langle v,x-\bar{x} \rangle \leq o(\|x-\bar{x}\|) \quad\textrm{ for }x\in Q.$$ 
\item The {\em limiting normal cone} to $Q$ at $\bar{x}$, denoted $N_Q(\bar{x})$, consists of all vectors $v\in\R^n$ such that there are sequences $x_i\stackrel{Q}{\rightarrow} \bar{x}$ and $v_i\rightarrow v$ with $v_i\in\hat{N}_Q(x_i)$.
\end{itemize}
} 
\end{defn}

Given any set $Q\subset\R^n$ and a mapping $f\colon Q\to \widetilde{Q}$, where $\widetilde{Q}\subset\R^m$, we say that $f$ is $C^p$-{\em smooth} (for $p=1,\ldots, \infty$) if for each point $\bar{x}\in Q$, there is a neighborhood $U$ of $\bar{x}$ and a $C^p$-smooth mapping $\hat{f}\colon \R^n\to\R^m$ that agrees with $f$ on $Q\cap U$. If a $C^p$-smooth function $f$ is bijective and its inverse is also $C^p$-smooth, then we say that $f$ is a $C^p$-{\em diffeomorphism}. 

\begin{defn}[Manifolds]
{\rm Consider a set $M\subset\R^n$. We say that $M$ is a $C^p$ {\em manifold} (or ``embedded submanifold'') of dimension $r$ if for each point $\bar{x}\in M$, there is an open neighborhood $U$ around ${\bar{x}}$ such that $M\cap U=F^{-1}(0)$, where $F\colon U\to\R^{n-r}$ is a $C^p$-smooth mapping with $\nabla F(\bar{x})$ of full rank. 
}
\end{defn}

If $M$ is a $C^1$ manifold, then for every point $\bar{x}\in M$, the normal cones $\hat{N}_M(\bar{x})$ and $N_M(\bar{x})$ are equal to the normal space to $M$ at $\bar{x}$, in the sense of differential geometry \cite[Example 6.8]{VA}.

Consider the extended real line $\overline{\R}:=\R\cup\{-\infty\}\cup\{+\infty\}$. We say that an extended-real-valued function is proper if it is never $\{-\infty\}$ and is not always $\{+\infty\}$.  For a function $f\colon\R^n\rightarrow\overline{\R}$, we define the {\em domain} of $f$ to be $$\mbox{\rm dom}\, f:=\{x\in\R^n: f(x)<+\infty\},$$ and we define the {\em epigraph} of $f$ to be $$\mbox{\rm epi}\, f:= \{(x,r)\in\R^n\times\R: r\geq f(x)\}.$$
A function $f\colon\R^n\to\overline{\R}$ is {\em lower-semicontinuous} (or {\em lsc} for short) at $\bar{x}$ if the inequality $\lf_{x\to\bar{x}} f(x)\geq f(\bar{x})$ holds.

The following is the key notion we study in the current work.
\begin{defn}[Strong local minimizer]
{\rm 
A point $\bar{x}$ is a {\em strong local minimizer }of a function $f\colon\R^n\to\overline{\R}$ if there exists a constant $\alpha >0$ and a neighborhood $U$ of $\bar{x}$ such that the inequality 
$$f(x)\geq f(\bar{x})+\frac{\alpha}{2} \|x-\bar{x}\|^2 \quad \textrm{ holds for all } x\in U.$$
}
\end{defn}


Functional counterparts of normal cones are subdifferentials.
\begin{defn}[Subdifferentials]\label{defn:subdiff}
{\rm Consider a function $f\colon\R^n\rightarrow\overline{\R}$ and a point $\bar{x}\in\R^n$ where $f$ is finite. The {\em regular} and {\em limiting subdifferentials} of $f$ at $\bar{x}$, respectively, are defined by 
$$\hat{\partial}f(\bar{x})= \{v\in\R^n: (v,-1)\in \hat{N}_{\mbox{{\scriptsize {\rm epi}}}\, f}(\bar{x},f(\bar{x}))\},$$  
$$\partial f(\bar{x})= \{v\in\R^n: (v,-1)\in N_{\mbox{{\scriptsize {\rm epi}}}\, f}(\bar{x},f(\bar{x}))\}.$$  
The {\em horizon subdifferential} of $f$ at $\bar{x}$ is the set
$$\partial^{\infty} f(\bar{x})= \{v\in\R^n: (v,0)\in N_{\mbox{{\scriptsize {\rm epi}}}\, f}(\bar{x},f(\bar{x}))\}.$$ 
}  
\end{defn}

For $x$ such that $f(x)$ is not finite, we follow the convention that $\hat{\partial}f(x)=\partial f(x)=\partial^{\infty} f(x)=\emptyset$. The subdifferentials $\hat{\partial} f(\bar{x})$ and $\partial f(\bar{x})$ generalize the classical notion of gradient. In particular, for ${C}^1$-smooth functions $f$ on $\R^n$, these two subdifferentials consist only of the gradient $\nabla f(x)$ for each $x\in\R^n$. For convex $f$, these two subdifferentials coincide with the convex subdifferential. The horizon subdifferential $\partial^{\infty} f(\bar{x})$ plays an entirely different role --- it detects horizontal ``normals'' to the epigraph --- and it plays a decisive role in subdifferential calculus. 
See \cite[Corollary 10.9]{VA} or \cite{lag} for more details.
 
\begin{thm}[Fermat \& sum rules]\label{thm:sum_rule}
Consider an lsc function $f\colon\R^n\to\overline{\R}$ and a closed set $Q\subset\R^n$. If $\bar{x}$ is a local minimizer of $f$ on $Q$ and the qualification condition 
$$\partial^{\infty} f(\bar{x})\cap N_Q(\bar{x})=\{0\} \qquad \textrm{ is valid},$$
then the inclusion  $0\in \partial f(\bar{x})+N_Q(\bar{x})$ holds.
\end{thm}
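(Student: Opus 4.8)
\medskip
\noindent\textbf{Proof proposal.}\ \ The plan is to absorb the constraint into the objective through the indicator function $\delta_Q$ (equal to $0$ on $Q$ and $+\infty$ off $Q$), and then combine the elementary Fermat rule with the limiting sum rule for $f+\delta_Q$. Since $\bar{x}$ is a local minimizer of $f$ on $Q$, it is an unconstrained local minimizer of $g:=f+\delta_Q$. Fermat's rule for $g$ falls straight out of the definition of the Fr\'echet normal cone: for $(x,r)\in\epi g$ near $(\bar{x},g(\bar{x}))$ one has $r\geq g(x)\geq g(\bar{x})$, hence $\langle(0,-1),(x,r)-(\bar{x},g(\bar{x}))\rangle=g(\bar{x})-r\leq 0$, so that $(0,-1)\in\hat{N}_{\epi g}(\bar{x},g(\bar{x}))$, i.e. $0\in\hat{\partial}g(\bar{x})\subseteq\partial g(\bar{x})$. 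All the content therefore lies in splitting $\partial g(\bar{x})$.

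To split it, note that $\epi\delta_Q=Q\times[0,+\infty)$, so the product rule for Fr\'echet and for limiting normal cones gives $\hat{\partial}\delta_Q(x)=\hat{N}_Q(x)$ for $x\in Q$ and $\partial\delta_Q(\bar{x})=\partial^{\infty}\delta_Q(\bar{x})=N_Q(\bar{x})$. The stated qualification condition is exactly what is needed to apply the limiting sum rule to the pair $f,\delta_Q$, which then gives $0\in\partial g(\bar{x})\subseteq\partial f(\bar{x})+N_Q(\bar{x})$. In a preliminaries section one may simply quote this sum rule from \cite[Theorem~10.1 and Corollary~10.9]{VA}; note also that if $f$ happens to be locally Lipschitz around $\bar{x}$ then $\partial^{\infty}f(\bar{x})=\{0\}$, the qualification condition is automatic, and the delicate step below evaporates.

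For a self-contained argument I would establish the required instance of the sum rule in two stages. Stage one is the fuzzy (approximate) sum rule for regular subdifferentials: from $0\in\hat{\partial}(f+\delta_Q)(\bar{x})$ one obtains, for each $k\in\N$, points $x_1^k\to\bar{x}$ with $f(x_1^k)\to f(\bar{x})$ and $x_2^k\to\bar{x}$ with $x_2^k\in Q$, together with $v_1^k\in\hat{\partial}f(x_1^k)$ and $v_2^k\in\hat{N}_Q(x_2^k)$ such that $\|v_1^k+v_2^k\|\to 0$; in $\R^n$ this is equivalent to the extremal principle and is the genuine technical heart of the matter, which in a paper of this type would be cited from \cite{VA} or \cite{Mord_1}. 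Stage two is a limiting argument whose only subtle point --- the step I expect to be the main obstacle --- is that the sequences $\{v_i^k\}$ need not be bounded. If $\|v_1^k\|\to\infty$ along a subsequence, then, since the Fr\'echet normal cones $\hat{N}_{\epi f}$ and $\hat{N}_Q$ are cones, the normalized vectors $v_1^k/\|v_1^k\|$ and $v_2^k/\|v_1^k\|=-v_1^k/\|v_1^k\|+o(1)$ converge, along a further subsequence, to some $u$ and $-u$ with $\|u\|=1$; passing to the limit in the corresponding epigraphical and normal-cone memberships places $u\in\partial^{\infty}f(\bar{x})$ and $-u\in N_Q(\bar{x})$, contrary to the qualification hypothesis. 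Hence $\{v_1^k\}$ and $\{v_2^k\}$ are bounded, and any cluster points $v_1\in\partial f(\bar{x})$, $v_2\in N_Q(\bar{x})$ satisfy $v_1+v_2=0$, which is precisely the assertion $0\in\partial f(\bar{x})+N_Q(\bar{x})$.
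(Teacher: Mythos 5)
The paper states this theorem without proof --- it is the standard constrained Fermat rule of Rockafellar--Wets (Theorem~8.15 and Corollary~10.9 of \cite{VA}), quoted as a known tool --- so there is no in-paper argument to compare yours against. Your route is the standard one and is essentially correct: absorb $Q$ via the indicator $\delta_Q$, apply the elementary Fermat rule to $f+\delta_Q$, and split $\partial(f+\delta_Q)(\bar{x})$ by the fuzzy sum rule followed by a normalization and limiting argument, with the horizon qualification used precisely to rule out unbounded subgradient sequences. Citing the fuzzy sum rule (equivalently, the extremal principle in $\R^n$) is legitimate, since that is where the real content sits, and your stage-two argument correctly insists on $f(x_1^k)\to f(\bar{x})$ so that cluster points land in $\partial f(\bar{x})$ and $\partial^{\infty}f(\bar{x})$ as these are defined through the epigraph. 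The one point to flag: your unboundedness argument produces $u\in\partial^{\infty}f(\bar{x})$ together with $-u\in N_Q(\bar{x})$, so the hypothesis it actually contradicts is $\partial^{\infty}f(\bar{x})\cap\bigl(-N_Q(\bar{x})\bigr)=\{0\}$ --- the form in which Rockafellar--Wets state the constraint qualification --- rather than the condition $\partial^{\infty}f(\bar{x})\cap N_Q(\bar{x})=\{0\}$ as literally printed in the theorem. This is a sign slip in the statement rather than a gap in your proof; it is harmless where the theorem is invoked in the paper, since there $Q$ is the sphere $\{x:\|x\|=t\}$ and $N_Q(x(t))$ is the line $\R x(t)$, which is symmetric under negation, but you should either insert the minus sign or remark on that symmetry when you apply the hypothesis.
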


We will also need the following definition in order to guarantee that the subdifferential $\partial f$ adequately reflects properties of the function $f$ itself.
\begin{defn}[Subdifferential continuity]
{\rm A function $f\colon\R^n\to\overline{\R}$ is {\em subdifferentially continuous at} $\bar{x}$ {\em for} $\bar{v}\in\partial f(\bar{x})$ if for any sequences $x_i\to\bar{x}$ and $v_i\to\bar{v}$, with $v_i\in\partial f(x_i)$, it must be the case that $f(x_i)\to f(\bar{x})$.}
\end{defn}

Subdifferential continuity of a function $f$ at $\bar{x}$ for $\bar{v}$ was introduced in \cite[Definition 1.14]{prox_reg}, and it amounts to requiring the function $(x,v)\mapsto f(x)$, restricted to $\gph \partial f$, to be continuous in the usual sense at the point $(\bar{x},\bar{v})$. For example, any strongly amenable (in particular, lsc and convex) function is subdifferentially continuous \cite[Proposition 2.5]{prox_reg}. 

\subsection{Semi-algebraic geometry}
A {\em semi-algebraic} set $S\subset\R^n$ is a finite union of sets of the form $$\{x\in \R^n: P_1(x)=0,\ldots,P_k(x)=0, Q_1(x)<0,\ldots, Q_l(x)<0\},$$ where $P_1,\ldots,P_k$ and $Q_1,\ldots,Q_l$ are polynomials in $n$ variables. In other words, $S$ is a union of finitely many sets, each defined by finitely many polynomial equalities and inequalities. A function $f\colon\R^n\to\overline{\R}$ is {\em semi-algebraic} if $\mbox{\rm epi}\, f\subset\R^{n+1}$ is a semi-algebraic set. For an extensive discussion on semi-algebraic geometry, see the monographs of Basu-Pollack-Roy \cite{ARAG}, Lou van den Dries \cite{LVDB}, and Shiota \cite{Shiota}. For a quick survey, see the article of van den Dries-Miller \cite{DM} and the surveys of Coste \cite{Coste-semi, Coste-min}. Unless otherwise stated, we follow the notation of \cite{DM} and \cite{Coste-semi}. 

A fundamental fact about semi-algebraic sets is provided by the Tarski-Seidenberg Theorem \cite[Theorem 2.3]{Coste-semi}. It states that the image of any semi-algebraic set $Q\subset\R^n$, under a projection to any linear subspace of $\R^n$, is a semi-algebraic set. From this result, it follows that a great many constructions preserve semi-algebraicity. In particular, for a semi-algebraic function $f\colon\R^n\to\overline{\R}$, it is easy to see that all the subdifferential set-valued mappings are semi-algebraic. See for example \cite[Proposition 3.1]{tame_opt}. 

The following two well-known theorems will be of great use for us \cite[Theorems~4.1, 4.5]{DM}. 
\begin{thm}[Semi-algebraic monotonicity]\label{thm:mon}
Consider a semi-algebraic function $f\colon (a,b)\to\R$. Then there are finitely many points $a=t_0 <t_1 <\ldots < t_k =b$ such that the restriction of $f$ to each interval $(t_i,t_{i+1})$ is $C^2$-smooth and either strictly monotone or constant.
\end{thm}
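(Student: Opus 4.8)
The plan is to build the partition of $(a,b)$ by two successive rounds of refinement, the whole argument resting on one elementary observation: every semi-algebraic subset of $\R$ is a finite union of points and open intervals. (Indeed such a set is a Boolean combination of sets $\{P_i>0\}$ for one-variable polynomials $P_i$, and the finitely many real roots of the $P_i$ cut $\R$ into finitely many open intervals and points on each of which every $P_i$ has constant sign.) I will use two consequences: a semi-algebraic subset of $\R$ with empty interior is finite; and for a semi-algebraic function $g$ on an interval $I$ the sets $\{g>0\}$, $\{g<0\}$, $\{g=0\}$ are each a finite union of points and open intervals.

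\emph{First round: reduce to piecewise $C^2$.} Here I would exploit the cylindrical structure of $\gph f\subset\R^2$. Write $\gph f$ as a finite union of sets defined by polynomial equalities and inequalities, let $P_1,\dots,P_N$ be the polynomials involved, and --- by partitioning $\R^2$ according to the finitely many possible sign vectors of $(P_1,\dots,P_N)$ --- assume $\gph f$ is a union of cells on each of which this sign vector is constant. Viewing each $P_i$ as a polynomial in $y$ with coefficients polynomial in $x$, collect into a finite set $B\subset(a,b)$ the zeros of the leading coefficients, discriminants, and pairwise resultants of the $P_i$. On each open interval $I$ of $(a,b)\setminus B$ the real $y$-roots of the $P_i$ then keep constant multiplicities, never collide, and --- by the implicit function theorem --- depend $C^2$-smoothly and semi-algebraically on $x\in I$; listing them as $\xi_1(x)<\dots<\xi_p(x)$, their graphs together with the open bands between them partition $I\times\R$ into pieces on each of which the sign vector of $(P_1,\dots,P_N)$ is constant. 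Hence $\gph f\cap(I\times\R)$ is a union of such pieces; being the graph of a function defined on all of $I$, it can contain neither a band nor two distinct $\xi_l$-graphs, so $f|_I$ coincides with a single $\xi_l$ and is in particular $C^2$-smooth. (Equivalently, one may simply invoke the $C^2$ cell decomposition theorem for semi-algebraic sets from the cited references.)

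\emph{Second round: reduce to monotone or constant.} Adjoin the points of $B$ to the breakpoints, so that $f$ is $C^2$ on each open subinterval $I$ obtained so far; on such $I$ the derivative $f'$ is again semi-algebraic. By the elementary observation, the sets $\{f'>0\}$, $\{f'<0\}$, $\{f'=0\}$ within $I$ are finite unions of points and open intervals, and I adjoin all of their endpoints to the breakpoints. On each of the finitely many open subintervals now produced, $f'$ is everywhere positive, everywhere negative, or identically zero, so $f$ is there strictly increasing, strictly decreasing, or constant. Relabelling the finitely many accumulated breakpoints as $a=t_0<t_1<\dots<t_k=b$ completes the argument.

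I expect the first round to be the real obstacle: extracting even piecewise continuity --- let alone $C^2$-smoothness --- from the bare hypothesis that $\gph f$ is semi-algebraic is precisely the content of the cell decomposition theorem in the plane, and making the root functions $\xi_l$ genuinely smooth needs the care with multiplicities, discriminants and resultants sketched above. Once piecewise $C^2$-smoothness is available, by contrast, the monotonicity step is immediate from the one-dimensional finiteness fact applied to the sign of $f'$.
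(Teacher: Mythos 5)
Your argument is essentially correct, but note that the paper does not prove this statement at all: it is quoted verbatim as a known result, with the proof delegated to the cited survey of van den Dries--Miller (Theorems 4.1 and 4.5 there). What you have written is, in substance, the standard proof of the Monotonicity Theorem --- a planar cylindrical decomposition to get piecewise $C^2$-smoothness, followed by the one-dimensional finiteness lemma applied to the sign of $f'$ --- so there is no genuinely different route to compare against, only your reconstruction versus a citation. Both of your rounds are sound: the observation that $\gph f\cap(I\times\R)$ is a union of full cells (graphs and bands), can contain no band and no two distinct root-graphs, yet must be nonempty over every $x\in I$ because $f$ is total, correctly forces $f|_I=\xi_l$ for a single $l$; and the refinement by the sign of $f'$ works because each constituent open interval of $\{f'>0\}$, $\{f'<0\}$, $\{f'=0\}$ either contains or misses each interval between consecutive breakpoints.

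Two small technical debts remain in the first round, both standard. First, the discriminant of $P_i$ (respectively the resultant of $P_i$ and $P_j$) may vanish identically when $P_i$ has a repeated factor (respectively when $P_i$ and $P_j$ share a factor), so before collecting the bad set $B$ you must replace each $P_i$ by its square-free part and pass to a family of pairwise coprime factors; you should also dispose separately of the $P_i$ that do not involve $y$, whose zero sets contribute vertical lines to $B$ rather than root functions. Second, in the passage from ``nonvanishing leading coefficient and discriminant on $I$'' to ``constantly many real roots varying $C^2$-smoothly'' you are implicitly using that a nonreal root can only become real by colliding with its conjugate, which the nonvanishing discriminant forbids; this is worth a sentence. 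Neither point affects the correctness of the overall plan, and the assertion that $f'$ is semi-algebraic on each smooth piece follows from Tarski--Seidenberg exactly as the paper indicates for subdifferentials.
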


\begin{thm}[Semi-algebraic selection]\label{thm:sel}
Consider a semi-algebraic set-valued mapping $F\colon\R^n\tto\R^m$. Then there is a single-valued semi-algebraic mapping $f\colon\dom F\to\R^m$ satisfying $f(x)\in F(x)$ for all $x\in \dom F$.
\end{thm}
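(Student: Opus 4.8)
The plan is to prove this by induction on the dimension $m$ of the target space, reducing everything to the one-dimensional fiber case and letting the Tarski--Seidenberg theorem do the bookkeeping. Note first that $\dom F$ is the image of the semi-algebraic set $\gph F$ under the projection $\R^n\times\R^m\to\R^n$, hence is itself semi-algebraic, so asking for a semi-algebraic $f$ on $\dom F$ is meaningful.

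For the base case $m=1$, fix $x\in\dom F$ and observe that $S:=F(x)$ is a nonempty semi-algebraic subset of $\R$, hence a finite union of points and intervals. From such an $S$ one can single out a point by a short case analysis: if $S$ is bounded below and $\inf S\in S$, take $\inf S$; if $S$ is bounded below but $\inf S\notin S$, then $S$ contains a maximal open interval $(a,b)$ with $a=\inf S$, and take its midpoint (or $a+1$ when $b=+\infty$); and if $S$ is unbounded below, take $b-1$, where $(-\infty,b)$ is the maximal initial ray contained in $S$ (or take $0$ when $S=\R$). The resulting point always lies in $F(x)$, and --- the crucial point --- every case distinction and every selected value is expressible by a first-order formula in the parameter $x$ over the semi-algebraic set $\gph F$; by Tarski--Seidenberg the associated selection $f\colon\dom F\to\R$ is therefore semi-algebraic.

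For the inductive step with $m\geq 2$, write $\R^m=\R^{m-1}\times\R$ and let $\pi\colon\R^n\times\R^{m-1}\times\R\to\R^n\times\R^{m-1}$ be the coordinate projection. Then $\pi(\gph F)$ is the graph of a semi-algebraic mapping $G\colon\R^n\tto\R^{m-1}$ with $\dom G=\dom F$, so by the induction hypothesis there is a semi-algebraic selection $g\colon\dom F\to\R^{m-1}$ with $g(x)\in G(x)$ for all $x$. Now define $H\colon\R^n\tto\R$ by $H(x):=\{t\in\R:(g(x),t)\in F(x)\}$; this is the preimage of $\gph F$ under the semi-algebraic map $(x,t)\mapsto(x,g(x),t)$, hence semi-algebraic, and it has nonempty values on $\dom F$ precisely because $g(x)\in G(x)$. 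Applying the base case to $H$ gives a semi-algebraic $h\colon\dom F\to\R$ with $h(x)\in H(x)$, and then $f(x):=(g(x),h(x))$ is the desired semi-algebraic selection of $F$.

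The only delicate step is the base case: one must check that choosing a point out of a one-dimensional semi-algebraic set can be done by a single first-order formula uniformly in the parameter, so that semi-algebraicity is preserved. Everything else is routine manipulation of projections and preimages handled by Tarski--Seidenberg. (Alternatively, one could decompose $\gph F$ into finitely many cells adapted to the projection onto $\R^n$ and select, say, the lowest-lying function on each cell; the inductive argument above is chosen to avoid invoking cell decomposition.)
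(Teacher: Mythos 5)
Your proof is correct, and it is essentially the standard argument for definable choice (the paper itself gives no proof of Theorem~\ref{thm:sel}, citing it from van den Dries--Miller, where the same strategy is used): induct on the dimension of the target, reduce to selecting a point from a nonempty semi-algebraic subset of $\R$ uniformly in the parameter, and let Tarski--Seidenberg certify that the resulting selection is semi-algebraic. The one step that needs care --- that your case analysis on $F(x)\subset\R$ (attained infimum, first maximal interval, initial ray) is expressible by a single first-order formula with $x$ as a parameter --- is exactly the step you flag and handle correctly, and the inductive step via the projection $\R^m\to\R^{m-1}$ and the fiber mapping $H(x)=\{t:(g(x),t)\in F(x)\}$ is sound.
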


The proof of the following result appears implicitly in \cite[Proposition 4]{Lewis-Clarke}. We outline an argument for completeness.
\begin{lem}[Semi-algebraic chain rule]\label{lem:sa_chain} {\hfill \\}
Consider an lsc semi-algebraic function $f\colon\R^n\to\overline{\R}$ and a semi-algebraic curve $x\colon [0,T)\to\dom f$. Then there exists $\ep >0$ so that both $x$ and $f\circ x$ are $C^2$-smooth on $(0,\ep)$ and for any $t\in (0,\ep)$ we have
\begin{align*}
v\in\sd f(x(t)) \quad &\Longrightarrow\quad \lan v,\dot x(t)\ran =(f\circ x)'(t), \\
v\in\sd^{\infty} f(x(t)) \quad &\Longrightarrow\quad \lan v,\dot x(t)\ran =0.
\end{align*}
\end{lem}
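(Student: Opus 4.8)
The plan is to reduce everything to the one-dimensional monotonicity structure provided by Theorem~\ref{thm:mon}, applied to the finitely many semi-algebraic functions that naturally arise here. First I would observe that since $x\colon[0,T)\to\dom f$ is a semi-algebraic curve, each coordinate function $t\mapsto x_i(t)$ is semi-algebraic on $(0,T)$, and so is the composition $g(t):=(f\circ x)(t)=f(x(t))$, because $g$ is obtained from the semi-algebraic sets $\gph x$ and $\epi f$ (or $\gph f$) by projections, and the Tarski--Seidenberg theorem guarantees semi-algebraicity of the result; note also that $g$ is real-valued on $(0,T)$ since $x$ maps into $\dom f$. Applying Theorem~\ref{thm:mon} to each $x_i$ and to $g$ and intersecting the finitely many ``good'' intervals adjacent to $0$, I obtain $\ep>0$ such that $x$ and $f\circ x$ are all $C^2$-smooth on $(0,\ep)$; shrinking $\ep$ further I may also assume, again by Theorem~\ref{thm:mon}, that each $x_i$ and $g$ is either constant or strictly monotone on $(0,\ep)$, and in particular that $\dot x(t)$ and $g'(t)$ are continuous there.

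The core of the argument is the differential relation. Fix $t_0\in(0,\ep)$ and $v\in\partial f(x(t_0))$. The idea is to use the definition of the limiting subdifferential via the epigraph together with the smoothness of the curve $t\mapsto(x(t),g(t))$, which for $t$ near $t_0$ traces out a $C^1$ path lying in $\epi f$. The vector $(v,-1)$ belongs to $N_{\epi f}(x(t_0),g(t_0))$. If $v$ were in fact a \emph{regular} (Fr\'echet) normal, i.e. $(v,-1)\in\hat N_{\epi f}(x(t_0),g(t_0))$, then testing against the tangent direction $(\dot x(t_0),g'(t_0))$ of this smooth path from both sides (one-sided limits as $t\downarrow t_0$ and $t\uparrow t_0$, which is legitimate since we are in the interior of $(0,\ep)$) forces $\langle v,\dot x(t_0)\rangle - g'(t_0)=0$, giving the desired equality at $t_0$. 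For a genuine \emph{limiting} normal, write $(v,-1)$ as a limit of $(v_i,-s_i)$ with $(v_i,-s_i)\in\hat N_{\epi f}(x_i,r_i)$ and $(x_i,r_i)\to(x(t_0),g(t_0))$ in $\epi f$; here one must be careful that the approximating points need not lie on the curve. To handle this I would instead use the chain-rule / subdifferential formula for $g=f\circ x$ restricted to a smooth arc: since the curve is $C^1$ near $t_0$ and $v\in\partial f(x(t_0))$, the standard calculus (e.g. \cite[Theorem 10.6, Exercise 10.7]{VA}, chain rule for composition with a smooth map) yields $\langle v,\dot x(t_0)\rangle \in\partial g(t_0)$ in the appropriate one-sided sense; but $g$ is $C^2$ at $t_0$, so $\partial g(t_0)=\{g'(t_0)\}$, and the equality follows. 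The horizon case is identical, replacing $(v,-1)$ by $(v,0)$ and using that $\partial^\infty g(t_0)=\{0\}$ for a smooth $g$: any $(v,0)\in N_{\epi f}$ pushed through the chain rule lands in $\partial^\infty g(t_0)$, forcing $\langle v,\dot x(t_0)\rangle=0$.

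The main obstacle I anticipate is the passage from the pointwise/one-sided subgradient inequality to the clean equality $\langle v,\dot x(t)\rangle=(f\circ x)'(t)$, precisely because limiting normals to $\epi f$ need not be realized by points on the curve $x$, so one cannot naively ``test against the tangent.'' The cleanest route around this is to invoke a chain rule for composition of a subdifferentiable function with a $C^1$ map along the arc, which converts $v\in\partial f(x(t))$ into a one-sided derivative estimate for $g$, and then exploit that $g$ is actually $C^2$ (hence has a single-valued, two-sided derivative) on $(0,\ep)$ to collapse the inequality to an equality. A subtlety worth stating explicitly in the write-up is why the approximating normals in the definition of $N_{\epi f}$ can be taken with $s_i>0$ (so that they descend to genuine subgradients rather than horizon subgradients); this is where the hypothesis $v\in\partial f(x(t))$ as opposed to $v\in\partial^\infty f(x(t))$ is used, and it is a standard fact (\cite[Theorem 8.9]{VA}) that I would cite rather than reprove.
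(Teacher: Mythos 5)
Your setup (semi-algebraicity of $g:=f\circ x$ via Tarski--Seidenberg, then Theorem~\ref{thm:mon} applied to the coordinates of $x$ and to $g$ to get $C^2$-smoothness on some $(0,\ep)$) matches the paper, and your treatment of the Fr\'echet case --- testing $(v,-1)\in\hat N_{\sepi f}(x(t_0),g(t_0))$ against the two one-sided tangent directions of the smooth arc $t\mapsto (x(t),g(t))$ inside $\epi f$ --- is correct. The gap is in the limiting case, which you rightly identify as the crux but do not actually resolve. The step ``$v\in\partial f(x(t_0))$ yields $\langle v,\dot x(t_0)\rangle\in\partial g(t_0)$ by standard calculus'' is not a standard fact: the chain rule of \cite[Theorem 10.6]{VA} gives the \emph{opposite} inclusion $\partial g(t_0)\subset\{\langle v,\dot x(t_0)\rangle: v\in\partial f(x(t_0))\}$ (and only under a qualification condition), which at best says that \emph{some} subgradient realizes $g'(t_0)$, whereas the lemma asserts the equality for \emph{every} $v$. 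The ``inner'' inclusion you invoke does hold for Fr\'echet subgradients (directly from the $o(\cdot)$ definition together with local Lipschitzness of $x$), but it fails for limiting subgradients for exactly the reason you flagged: the approximating Fr\'echet normals live at points of $\epi f$ that need not lie on the curve, so nothing forces them --- or their limit --- to be orthogonal to the curve's tangent. Tellingly, your argument for the limiting implication uses semi-algebraicity of $f$ nowhere beyond the smoothness of $g$, yet the statement is false for general lsc functions with smooth $f\circ x$; some semi-algebraic input must enter at precisely this point.

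The paper's way around this is geometric rather than calculus-based: after shrinking $\ep$, the arc $\mathcal{M}=\{(x(t),g(t)):t\in(0,\ep)\}$ can be taken to be a \emph{stratum} of a Whitney $C^2$-stratification of $\epi f$ (\cite[Theorem 4.8]{DM}). The Whitney condition then forces the normal-cone inclusion $N_{\sepi f}(x(t),g(t))\subset N_{\mathcal{M}}(x(t),g(t))=\{(v,\al):\langle v,\dot x(t)\rangle+\al\, g'(t)=0\}$ for all $t\in(0,\ep)$; that is, \emph{every} limiting normal, including those generated by off-curve points, annihilates the tangent to the arc. Both implications then follow by taking $\al=-1$ and $\al=0$ in Definition~\ref{defn:subdiff}. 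To complete your proof you would need to import this stratification fact (or an equivalent projection formula for limiting normals along strata); the chain-rule route as written does not close.
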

\begin{proof}
By Theorem~\ref{thm:mon}, there exists $\ep >0$ such that both $x$ and $f\circ x$ are $C^2$-smooth on $(0,\ep)$. Let $\phi:=f\circ x$ and define $$\mathcal{M}:=\{(x(t),\phi(t)):\; t\in (0,\ep)\}.$$ Clearly being a graph of a $C^2$-smooth semi-algebraic function, the set $\mathcal{M}$ is a semi-algebraic $C^2$-submanifold of the epigraph $\epi f$.  Taking if necessary a smaller $\ep$, we can be sure that there
is a Whitney $C^2$-stratification of $\epi f$ such that $\mathcal{M}$ is a stratum; see for example \cite[Theorem 4.8]{DM}. Then for
any real $t\in (0,\ep)$, the inclusion  
$$N_{\epi f}(x(t),\phi(t))\subset N_{\mathcal{M}} (x(t), \phi(t)),$$
holds.
On the other hand, we have the representation
$$
N_{\mathcal{M}} (x(t), \phi(t))=\{(v,\al):\; \lan v,\dot x(t)\ran +\al\phi '(t)=0\}.
$$
Finally recalling Definition~\ref{defn:subdiff},
the result follows immediately. \qed
\end{proof}

\section{Strong subregularity and growth of semi-algebraic functions}
\label{sec:main}
We are now ready to prove the main result of the current work. We should note that the implication $1\Rightarrow 2$ in the theorem below is true without semi-algebraicity (see Theorem~\ref{thm:gen},\cite[Theorem~3.1]{Dima_Ng},\cite[Theorem~2.1]{AG_infin}). The semi-algebraic setting, on the other hand, allows us to provide an appealing geometric argument of this result. The implication $2\Rightarrow 1$, in contrast, may easily fail when the function in question is not semi-algebraic; see Example~\ref{ex:fail}.
\begin{thm}[Strong metric subregularity and quadratic growth]\label{thm:mainres} {\hfill \\ }
Consider an lsc, semi-algebraic function $f\colon\R^n\to\overline{\R}$ and a point $\bar{x}\in\R^n$ that is a local minimizer of $f$. Consider the following two statements:
\begin{enumerate}
\item Subdifferential $\sd f$  is strongly subregular at $(\bar{x},0)$ with modulus $\kappa$,
\item There exist real numbers $\al>0$ and $\ep >0$ such that the inequality
$$f(x)\geq f(\bar{x})+\frac{\alpha}{2} \|x-\bar{x}\|^2\quad \textrm{ holds for all } x\in \B_{\ep}(\bar{x}).$$
\end{enumerate}
Then the implication $1\Rightarrow 2$ holds where we can choose $\alpha$ arbitrarily in $(0,\kappa^{-1})$. The converse implication $2\Rightarrow 1$ holds provided that $f$ is subdifferentially continuous at $\bar{x}$ for $\bar{v}=0$.
\end{thm}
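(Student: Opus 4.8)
The plan is to handle each implication by restricting $f$ to a carefully chosen semi-algebraic arc through $\bar x$ and applying the semi-algebraic chain rule (Lemma~\ref{lem:sa_chain}), Fermat's rule (Theorem~\ref{thm:sum_rule}), semi-algebraic selection (Theorem~\ref{thm:sel}) and semi-algebraic monotonicity (Theorem~\ref{thm:mon}).

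\emph{The implication $1\Rightarrow 2$.} Unless $\bar x$ is isolated in $\dom f$ (in which case $f\equiv+\infty$ on a punctured ball around $\bar x$ and statement~2 is trivial), the function $\varphi(r):=\min\{f(x):\|x-\bar x\|=r\}$ is finite for all small $r>0$: the minimum is attained because $f$ is lsc on the compact sphere $S_r:=\{x:\|x-\bar x\|=r\}$, and $\varphi$ is semi-algebraic. Using Theorem~\ref{thm:sel} I would pick a semi-algebraic curve $r\mapsto x(r)$ into $\dom f$ with $x(r)$ minimizing $f$ over $S_r$, extend it by $x(0):=\bar x$, and, shrinking the interval via Lemma~\ref{lem:sa_chain}, arrange that $x$ and $\varphi=f\circ x$ are $C^2$ on $(0,\ep)$. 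Since $\langle x(r)-\bar x,\dot x(r)\rangle=\tfrac12\frac{d}{dr}\|x(r)-\bar x\|^2=r$, the chain rule for $\partial^{\infty} f$ shows that the qualification condition $\partial^{\infty} f(x(r))\cap N_{S_r}(x(r))=\{0\}$ holds automatically; Theorem~\ref{thm:sum_rule} then furnishes some $v(r)\in\partial f(x(r))$ which, lying in the normal line to the sphere, has the form $v(r)=\lambda(r)(x(r)-\bar x)$. The chain rule for $\partial f$ gives $\varphi'(r)=\langle v(r),\dot x(r)\rangle=\lambda(r)r$, so $\|v(r)\|=|\varphi'(r)|$, and strong subregularity with modulus $\kappa$ applied at $x(r)$ (legitimate once $r\le\ep$) reads $r=\|x(r)-\bar x\|\le\kappa\,d(0;\partial f(x(r)))\le\kappa\,|\varphi'(r)|$, i.e. $|\varphi'(r)|\ge r/\kappa$ near $0$.

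It remains to turn this into the growth bound. Set $g(r):=\varphi(r)-f(\bar x)$: then $g$ is $C^2$ and semi-algebraic on $(0,\ep)$, $g\ge 0$ near $0$ (as $\bar x$ is a local minimizer), and $|g'|\ge r/\kappa$ near $0$. By Theorem~\ref{thm:mon}, $g'$ has constant sign near $0$; the eventually-constant case for $g$ is excluded by $|g'|\ge r/\kappa$. If $g$ is eventually increasing, then $g'(r)\ge r/\kappa$, and integrating from $0$ (with $g(0^+)\ge 0$) gives $g(r)\ge r^2/(2\kappa)$, which exceeds $\tfrac{\al}{2}r^2$ for any prescribed $\al\in(0,\kappa^{-1})$. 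If $g$ is eventually decreasing, then $g(0^+)=0$ would force $g\equiv0$ (contradicting $|g'|\ge r/\kappa$), so $g(0^+)>0$ and $g$ is bounded below by a positive constant near $0$, again yielding $g(r)\ge\tfrac{\al}{2}r^2$ for small $r$. Since $\varphi(r)=\min_{\|x-\bar x\|=r}f(x)$, this is exactly statement~2 with $\al$ free in $(0,\kappa^{-1})$. (Note this direction does not use subdifferential continuity, in line with its validity beyond the semi-algebraic setting.)

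\emph{The implication $2\Rightarrow 1$.} I would argue by contradiction. If $\partial f$ is not strongly subregular at $(\bar x,0)$, then, writing $\rho(x):=d(0;\partial f(x))$, the semi-algebraic function $r\mapsto\inf\{\rho(x)/\|x-\bar x\|:\|x-\bar x\|=r\}$ has liminf $0$ as $r\to0^+$, hence tends to $0$. Using Theorem~\ref{thm:sel} I would select a semi-algebraic curve $r\mapsto x(r)$ into $\dom f$ with $\|x(r)-\bar x\|=r$ and $\rho(x(r))=o(r)$, set $x(0):=\bar x$, shrink so that $x$ is $C^2$, and for each small $r$ fix some $v(r)\in\partial f(x(r))$ with $\|v(r)\|=o(r)$. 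Three facts then collide with statement~2: (i) since $x(r)\to\bar x$ and $v(r)\to0$ with $v(r)\in\partial f(x(r))$, subdifferential continuity forces $f(x(r))\to f(\bar x)$; (ii) writing $x(r)-\bar x=r\,\theta(r)$ with $\theta$ the semi-algebraic unit direction, which converges by Theorem~\ref{thm:mon}, one gets $\dot x(r)=\theta(r)+r\dot\theta(r)$ with $r\dot\theta(r)\to0$, hence $\|\dot x(r)\|\to1$; (iii) by Lemma~\ref{lem:sa_chain}, $\varphi:=f\circ x$ is $C^2$ with $\varphi'(r)=\langle v(r),\dot x(r)\rangle$, so $|\varphi'(r)|\le\|v(r)\|\,\|\dot x(r)\|=o(r)$. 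Combining (i) and (iii) via the fundamental theorem of calculus gives $\varphi(r)-f(\bar x)=\int_0^r\varphi'(s)\,ds=o(r^2)$; but statement~2 forces $\varphi(r)-f(\bar x)=f(x(r))-f(\bar x)\ge\tfrac{\al}{2}\|x(r)-\bar x\|^2=\tfrac{\al}{2}r^2$ for small $r$. Dividing by $r^2$ and letting $r\to0^+$ yields $\al\le0$, a contradiction, so $\partial f$ is strongly subregular at $(\bar x,0)$.

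\emph{Where the difficulty lies.} The crux is step (ii) above: bounding the speed $\|\dot x(r)\|$ of the distance-parametrized bad curve near $\bar x$. Semi-algebraicity supplies a genuine limiting tangent direction and rules out any logarithmic-type blow-up of $\dot\theta$ (if some component of $r\dot\theta(r)$ did not vanish, $\theta$ would accumulate a divergent logarithmic increment, contradicting its convergence), and this is exactly the structure absent from pathological functions such as $f(x)=2x^2+\tfrac12x^2\sin(1/x)$ in Example~\ref{ex:fail}; it is also why subdifferential continuity in fact (i) is indispensable. A secondary subtlety, in the first implication, is that $\lim_{r\to0^+}\varphi(r)$ need not equal $f(\bar x)$, which is what forces the short case analysis on the monotonic behaviour of $g$ rather than a one-line integration.
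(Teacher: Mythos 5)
Your proof is correct and follows essentially the same strategy as the paper's: in both directions one restricts $f$ to a semi-algebraic selection curve parametrized by the distance to $\bar x$, applies Lemma~\ref{lem:sa_chain} together with Theorem~\ref{thm:sum_rule} (for $1\Rightarrow2$) or the minimal-norm selection (for $2\Rightarrow1$), and integrates; your case analysis on $g$ in the first implication is exactly the paper's preliminary dichotomy on $\lim_{t\searrow0}\vf(t)$ and $\lim_{t\searrow 0}\dot\vf(t)$. The one point where you genuinely diverge is the step you correctly identify as the crux of $2\Rightarrow1$: the paper sidesteps the need to control $\|\dot x(r)\|$ by reparametrizing the bad curve by arclength (so $\|\dot y\|\equiv1$) and then bounding $\tau/\|y(\tau)\|$ via the vector mean value theorem and the constant-sign property of the components $\dot y_i$, whereas you keep the radius parametrization and bound $\|\dot x(r)\|$ directly through the finite-length property of bounded semi-algebraic curves (the logarithmic-divergence argument for $r\dot\theta(r)$). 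Both devices encode the same fact --- comparability of arclength and distance to $\bar x$ along a semi-algebraic arc --- and both rely on Theorem~\ref{thm:mon}; yours yields a slightly cleaner contradiction ($o(r^2)$ against $\tfrac{\al}{2}r^2$), while the paper's avoids any discussion of whether $\dot x$ blows up.
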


\begin{proof}
Without loss of generality, assume $\bar{x}=0$ and $f(\bar{x})=0$.

\noindent \underline{$1\Rightarrow 2$:} Suppose that the subdifferential $\sd f$  is strongly metrically subregular at $(\bar{x},0)$ with modulus $\kappa$ and 
define the function $$\vf(t):=\inf\{ f(x):\; \| x\|=t\}.$$
Standard arguments using quantifier elimination show that $\vf$ is semi-algebraic (see the discussion in \cite[Section 2.1.2]{Coste-semi}). It follows from Theorem~\ref{thm:mon} that $\vf$ is $C^2$-smooth for all sufficiently small $t$. If $\lim_{t\searrow 0}\vf(t)>0$,  or $\vf(t)\to 0$ but $\lim_{t\searrow 0}\dot\vf(t)$ is positive, then the theorem obviously holds.
So we assume $\vf(t)\to 0$ and $\dot{\vf}(t)\to 0$  as $t\searrow 0$.

Note that the infimum in the definition of $\vf$ is attained if $\vf(t)$ is finite.
Applying Theorem~\ref{thm:sel} to the mapping $t\mapsto\argmin\{ f(x):\; \| x\|=t\}$, there is a semi-algebraic mapping $x(t)$ such that $f(x(t))=\vf(t)$ for all
small $t$.

Applying Lemma~\ref{lem:sa_chain}, we deduce that there is a real $\ep >0$ so that both $x$ and $\vf$ are $C^2$-smooth on $(0,\ep)$ and for any $t\in (0,\ep)$ we have
\begin{align}
v\in\sd f(x(t)) \quad &\Longrightarrow\quad \lan v,\dot x(t)\ran =\vf'(t),\label{eqn:inc} \\ 
v\in\sd^{\infty} f(x(t)) \quad &\Longrightarrow\quad \lan v,\dot x(t)\ran =0.\notag
\end{align}
Observe 
$$ \langle  x(t),\dot x(t)\ran = \frac{1}{2}\frac{d}{dt}\| x(t)\|^2=t,$$
and hence the qualification condition 
$$\partial^{\infty} f(x(t)) \cap N_{\{x:\, \|x\|=t\}}(x(t))=\{0\}$$
holds for all $t\in (0,\ep)$. Consequently, since $x(t)$ minimizes $f$ subject to $\| x\|=t$, applying Theorem~\ref{thm:sum_rule}, we deduce that there is a real number $\la(t)$ satisfying
\begin{equation}\label{eqn:inc2}
\la(t)x(t)\in\sd f(x(t)).
\end{equation}
By strong subregularity, we have  $\|\la(t)x(t)\|\ge \kappa^{-1}\| x(t)\|$, that is $|\la(t)|\ge \kappa^{-1}$.
Finally, combining (\ref{eqn:inc}) and (\ref{eqn:inc2}), we get
$$
 \dot{\vf}(t)  = \la(t) \lan x(t),\dot x(t)\ran =\la(t) \frac{1}{2}\frac{d}{dt}\| x(t)\|^2=\la(t)t.
$$
Since $\bar{x}$ is a local minimizer, we have $\dot\vf(t)\ge 0$. Consequently, we obtain $\la(t)\ge 0$ and $\la(t)\ge \kappa^{-1}$, and hence $\vf(t)\ge \frac{1}{2\kappa}t^2$.


Suppose now that $f$ is subdifferentially continuous at $\bar{x}$ for $\bar{v}=0$.

\noindent \underline{$2\Rightarrow 1$:} Assume that $2$ holds.
Suppose also for the sake of contradiction that $\partial f$ is not strongly metrically subregular at $(\bar{x},0)$, that is $$\lf_{x\to\bar{x}} \frac{d(0,\partial f(x))}{\|x\|}=0.$$
Define the function 
$$H(t):=\argmin\Big\{\frac{d(0,\partial f(x))}{t}: \|x\|=t\Big\}.$$ 
Then an application of Theorem~\ref{thm:sel} yields a semi-algebraic path $x\colon (0,\ep)\to \R^n$ satisfying $x(t)\in H(t)$ for all $t$. Applying Theorem~\ref{thm:sel} to the mapping $$t\mapsto \argmin\{\|v\|: v\in\partial f(x(t))\},$$
yields a semi-algebraic path $v\colon (0,\ep)\to\R^n$ satisfying $v(t)\in\partial f(x(t))$. 
Observe 
$$\lim_{t\searrow 0} x(t)=0, ~~ \lim_{t\searrow 0} \frac{\|x(t)\|}{\|v(t)\|}=+\infty, ~~ \lim_{t\searrow 0} v(t)=0,$$
where the second equality follows from our assumption that $\partial f$ is not strongly metrically subregular at $(\bar{x},0)$.
Clearly we may extend $x$ and $v$ continuously to $[0,\ep)$.
Decreasing $\ep$, we may assume that on the interval $(0,\ep)$ the composition $f\circ x$ is $C^2$-smooth, $\|v(t)\|$ is non-decreasing, and that $\dot{x}(t)$ is nonzero. Moreover since $f$ is subdifferentially continuous at $\bar{x}$ for $0$, we have $\lim_{t\searrow 0} f(x(t))=0$. Hence the composition $f\circ x$ is continuous on $[0,\ep)$.

We now reparametrize $x$ by arclength. Namely, since $x$ is semi-algebraic and bounded, an application of Theorem~\ref{thm:mon} implies that $x$ has finite length
$$L:=\int^{\ep}_{0} \|\dot{x}(t)\|\; dt.$$
Define the function $s\colon [0,\ep]\to [0,L]$ by setting 
$$s(r):=\int^{r}_{0} \|\dot{x}(t)\|\; dt.$$
Let $y(\tau):= x(s^{-1}(\tau))$ and $\omega(\tau):=v(s^{-1}(\tau))$.
Clearly $y$ is $C^2$-smooth on $(0,L)$ and satisfies $\|\dot{y}(\tau)\|=1$ for all $\tau \in (0,L)$.

We successively conclude, using Lemma~\ref{lem:sa_chain}  for the second equality,
\begin{align*}
\frac{\alpha}{2} \|y(\tau)\|^2\leq f(y(\tau))&=\int_0^{\tau} \frac{d}{dr}(f\circ y)(r) \; dr=\int_0^\tau \langle \dot{y}(r),\omega(r)\rangle \; dr\\
&\leq \int_{0}^{\tau}\|\dot{y}(r)\|\cdot \|\omega(r)\|\; dr = \int_{0}^{\tau} \|\omega(r)\| \; dr\\
&\leq \tau \|\omega(\tau)\|.
\end{align*}
We deduce
\begin{equation}\label{eqn:key}
0< \frac{\alpha}{2} \leq \frac{\tau}{\|y(\tau)\|}\frac{\|\omega(\tau)\|}{\|y(\tau)\|}.
\end{equation}
Now the mean value theorem for vector-valued functions implies 
$$y(\tau)=\Big(\int^1_{0} \dot{y}(h\tau)\; dh\Big)\tau.$$
Hence we have 
$$\frac{\sqrt{n}\cdot \|y(\tau)\|}{\tau} \geq \frac{\|y(\tau)\|_{1}}{\tau}=\sum^n_{i=1} \Big|\int^1_{0} \dot{y}_i(h\tau)\; dh\Big|.$$
Since $x$ is semi-algebraic, so is the derivative $\dot{x}$. Applying Theorem~\ref{thm:mon}, we deduce that there exists $\delta >0$ such that each function $\dot{y}_i$ has a constant sign on $(0,\delta)$. Hence for $\tau\in (0,\delta)$, we obtain 
$$\sum^n_{i=1} \Big|\int^1_{0} \dot{y}_i(h\tau)\; dh\Big|=\int^1_{0} \sum^n_{i=1} |\dot{y}_i(h\tau)| \; dh\geq 1.$$
We conclude that the quantity $\frac{\tau}{\|y(\tau)\|}$ is bounded for small $\tau$. Letting $\tau$ tend to zero in (\ref{eqn:key}), we arrive at a contradiction. \qed
\end{proof}

The following examples show that the implication $2\Rightarrow 1$ of  Theorem~\ref{thm:mainres} may easily fail in absence of subdifferential continuity or semi-algebraicity.
\begin{exa}[Equivalence fails without subdifferential continuity] {\hfill \\}
{\rm Consider the lsc, semi-algebraic function $f\colon\R\to\R$ defined by 
$$f(x)= \left\{
        \begin{array}{ll}
            1+x^4, & \quad x < 0, \\
            x^2, & \quad x \geq 0.
        \end{array}
    \right.$$ 
Observe that $f$ is not subdifferentially continuous at $\bar{x}=0$ for $\bar{v}=0$. Clearly $0$ is a strong local minimizer of $f$. However, one can easily check that $\partial f$ is not strongly metrically subregular at $(0,0)$.
}
\end{exa}

\begin{exa}[Equivalence fails without semi-algebraicity]{\hfill \\} \label{ex:fail}
{\rm
Consider the function $f\colon\R\to\R$ defined by $$f(x):=2x^2+\frac{1}{2}x^2\sin\Big(\frac{1}{x}\Big).$$
It is easy to check that the origin is a strong local minimizer of $f$, while $\partial f$ is not strongly metrically subregular at $(0,0)$. 
}
\end{exa}

\begin{rem}
{\rm
During the review process, the Associate Editor posed the question of whether the analogue of Theorem~\ref{thm:mainres} holds pairing a quadratic growth condition at a not necessarily isolated minimizer and ``subregularity'' of the subdifferential (see Definition~\ref{defn:subreg}). More precisely when is existence of $\alpha >0$ and $\varepsilon >0$ such that 
$$
f(x)\ge f(\xb) +\frac{\alpha}{2} \,d\big(x;(\partial f)^{-1}(0)\big)^2\quad \textrm{ holds for all } x\in\B_{\ep}(\bar{x})
$$
equivalent to the condition
\begin{equation*}
d(x; (\partial f)^{-1}(0))\le\kk d\big(0;\partial f(x)\big)\quad\quad\mbox{ for all}\quad x\in\B_{\delta}(\bar{x}),
\end{equation*}
for some $\kappa >0$ and $\delta >0$.
We will see in the next section that subregularity of the subdifferential at a minimizer always entails a quadratic growth condition. In line with Theorem~\ref{thm:mainres}, we conjecture  that the converse holds for semi-algebraic functions. 
}
\end{rem}

\section{Metric subregularity entails quadratic growth}\label{sec:imply}
In this section, we observe that strong subregularity of a subdifferential, for virtually any subdifferential of interest, at a local minimizer implies strong local minimality. In fact, we will see that an analogues statement holds even for (not necessarily strong) subregularity of the subdifferential (see Definition~\ref{defn:subreg}). In essence, the argument we present is a clarification of the proofs of \cite[Theorem~2.1]{AG_infin} and \cite[Theorem~3.1]{Dima_Ng}, making them shorter and technically simpler.
For simplicity and more inline with the current paper, we work within the finite dimensional setting, and comment on the infinite dimensional  analogue at the very end.   
We begin with metric subregularity, an extension of the strong metric subregularity concept. 
\begin{defn}[Metric subregularity]\label{defn:subreg}
{\rm
A set-valued mapping $F\colon\R^n\rightrightarrows \R^m$ is {\em metrically subregular} at $(\bar{x},\bar{y})\in\gph F$ {\em with modulus } $\kappa \geq 0$ if there is a real number $\varepsilon >0$ such that the inequality
$$d\big(x, F^{-1}(\bar{y})\big)\leq \kappa d\big(\bar{y}; F(x)\big)\qquad \textrm{ holds for all} \quad x\in\B_{\varepsilon}(\bar{x}).$$
}
\end{defn}

In particular, when $F^{-1}(\bar{y})$ is a singleton, metric subregularity reduces to strong metric subregularity. 
We will need the following simple, albeit telling, result asserting that certain small Lipschitz perturbations do not destroy metric subregularity.
\begin{lem}[Metric subregularity under perturbations]\label{lem:pert} {\hfill \\}
Consider a set-valued mapping $F: \R^n\rra \R^m$ that is metrically subregular at $\xyb\in \gph F$ with modulus $\kappa >0$. Specifically 
fix a real number $\ep>0$ such that the inequality 
\begin{equation*}
d(x; F^{-1}(\bar{y}))\le\kk d\big(\oy;F(x)\big)\quad\quad\mbox{ holds for all}\quad x\in\B_{\ep}(\bar{x}).
\end{equation*}
Consider further a mapping $G: \R^n\rra \R^m$ satisfying
$$G(x)\subset \ell d(x; F^{-1}(\bar{y}))\B \quad\textrm{ for all } x\in\B_{\ep}(\bar{x}),$$ for some real number $ 0\leq \ell <\kappa^{-1}.$ Then $F+G$ is metrically subregular at $\xyb$ with modulus $(\kappa^{-1}-\ell)^{-1}$. More specifically, the inequality 
$$
d(x; (F+G)^{-1}(\bar{y}))\le (\kappa^{-1}-\ell)^{-1} d(\yb; (F+G)(x)) \quad \textrm{ holds for all } x\in \B_{\ep}(\xb).
$$
\end{lem}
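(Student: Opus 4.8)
The plan is to chase the definitions directly, using the triangle-type inequality $d(\bar y; F(x)) \le d(\bar y; (F+G)(x)) + (\text{perturbation term})$ and then absorbing the perturbation term into the left-hand side. First I would fix $x \in \B_\ep(\bar x)$ and observe that the conclusion is trivial unless $(F+G)(x) \ne \emptyset$; when it is nonempty, pick $w \in (F+G)(x)$ achieving (or nearly achieving) $\|w - \bar y\| \le d(\bar y; (F+G)(x)) + \eta$ for arbitrary $\eta > 0$. Write $w = u + z$ with $u \in F(x)$ and $z \in G(x)$. By the hypothesis on $G$ we have $\|z\| \le \ell\, d(x; F^{-1}(\bar y))$, so $\|u - \bar y\| \le \|w - \bar y\| + \ell\, d(x; F^{-1}(\bar y))$, and hence
$$
d(\bar y; F(x)) \le d(\bar y; (F+G)(x)) + \eta + \ell\, d(x; F^{-1}(\bar y)).
$$

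Next I would feed this into the metric subregularity estimate for $F$ on $\B_\ep(\bar x)$: since $d(x; F^{-1}(\bar y)) \le \kappa\, d(\bar y; F(x))$, combining gives
$$
d(x; F^{-1}(\bar y)) \le \kappa\big( d(\bar y; (F+G)(x)) + \eta \big) + \kappa \ell\, d(x; F^{-1}(\bar y)).
$$
Here I should first check that $d(x; F^{-1}(\bar y)) < \infty$ — this holds because $\bar x \in F^{-1}(\bar y)$ so $F^{-1}(\bar y) \ne \emptyset$ — which legitimizes subtracting $\kappa\ell\, d(x;F^{-1}(\bar y))$ from both sides. Since $\kappa\ell < 1$, we get $(1 - \kappa\ell)\, d(x; F^{-1}(\bar y)) \le \kappa\big(d(\bar y;(F+G)(x)) + \eta\big)$, i.e.
$$
d(x; F^{-1}(\bar y)) \le (\kappa^{-1} - \ell)^{-1}\big( d(\bar y; (F+G)(x)) + \eta \big).
$$
Letting $\eta \searrow 0$ yields the bound with $F^{-1}(\bar y)$ on the left.

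Finally I would upgrade from $F^{-1}(\bar y)$ to $(F+G)^{-1}(\bar y)$ on the left-hand side. The key observation is the reverse-type containment: if $x \in F^{-1}(\bar y)$, then $d(x; F^{-1}(\bar y)) = 0$, so the perturbation hypothesis forces $G(x) \subset \{0\}$; in particular $\bar y = \bar y + 0 \in (F+G)(x)$, so $x \in (F+G)^{-1}(\bar y)$. Thus $F^{-1}(\bar y) \subset (F+G)^{-1}(\bar y)$, whence $d(x; (F+G)^{-1}(\bar y)) \le d(x; F^{-1}(\bar y))$, and the previous display gives exactly
$$
d(x; (F+G)^{-1}(\bar y)) \le (\kappa^{-1} - \ell)^{-1}\, d(\bar y; (F+G)(x)) \quad \text{for all } x \in \B_\ep(\bar x),
$$
as required. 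The only mild subtlety — and the step most worth stating carefully — is the bookkeeping around finiteness of $d(x;F^{-1}(\bar y))$ and the possibility that $(F+G)(x)$ or $G(x)$ is empty; none of these is a genuine obstacle, since $F^{-1}(\bar y)\ne\emptyset$ and an empty $(F+G)(x)$ makes the right-hand side $+\infty$, and an empty $G(x)$ makes $(F+G)(x)$ empty as well.
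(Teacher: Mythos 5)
Your proposal is correct and follows essentially the same route as the paper's proof: the same triangle-inequality estimate $d(\bar y;F(x))\le d(\bar y;(F+G)(x))+\ell\, d(x;F^{-1}(\bar y))$, the same absorption of the $\kappa\ell$ term using $\kappa\ell<1$, and the same final inclusion $F^{-1}(\bar y)\subset (F+G)^{-1}(\bar y)$ from $G$ vanishing on $F^{-1}(\bar y)$. The only difference is that you spell out the $\eta$-approximation and the degenerate cases, which the paper leaves implicit.
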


\begin{proof}
For any  $x\in \B_{\ep}(\xb)$, we successively deduce
\begin{align*}
d(\yb; F(x)+G(x))&\ge d(\yb; F(x))-\ell d(x; F^{-1}(\bar{y}))\\
&\ge \kappa^{-1}d(x; F^{-1}(\bar{y}))-\ell d(x; F^{-1}(\bar{y})).
\end{align*}
On the other hand since $G$ vanishes on $F^{-1}(\bar{y})$, we have the inclusion $F^{-1}(\bar{y})\subset (F+G)^{-1}(\bar{y})$.
Consequently $d(x; (F+G)^{-1}(\bar{y}))\leq d(x; F^{-1}(\bar{y}))$. Combining this with the inequality above, the result follows. \qed
\end{proof}

The following is the main result of the current section. As we have alluded to at the onset of the section, the first part of the argument is an elaboration
of the proof of \cite[Theorem~2.1]{AG_infin}, while the second part is to a large extent a clarification of the proof of \cite[Theorem~3.1]{Dima_Ng}, making it shorter and technically simpler.
\begin{thm}[Subregularity of subdifferentials]\label{thm:gen} {\hfill \\}
Consider an lsc function $f\colon\R^n\to\overline{\R}$, which has a local minimum at $\xb\in\dom f$. Suppose that $\hat{\sd} f$ is metrically subregular at $(\xb,0)$ with modulus $\kappa >0$. Then for any real number $\alpha\in (0,\frac{1}{2\kappa})$ there exists $\ep >0$ such that the inequality 
$$
f(x)\ge f(\xb) +\frac{\alpha}{2} \,d\big(x;(\hat{\partial} f)^{-1}(0)\big)^2\quad \textrm{ holds for all } x\in\B_{\ep}(\bar{x}).
$$
If, on the other hand, the subdifferential $\partial f$ is metrically subregular at $(\xb,0)$ with modulus $\kappa >0$, then the inequality above with $\partial f$ replacing $\hat{\partial} f$
holds for any real number $\alpha\in (0,\frac{1}{\kappa})$.
\end{thm}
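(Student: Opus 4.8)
We outline the argument. Normalizing, take $\bar{x}=0$ and $f(\bar{x})=0$, and put $\hat S:=(\hat\partial f)^{-1}(0)$ and $S:=(\partial f)^{-1}(0)$. By Fermat's rule $0\in\hat\partial f(0)\subseteq\partial f(0)$, so $0\in\hat S\subseteq S$; since $0$ is a local minimizer, fix $\delta_0>0$ so that $f\ge0$ on $\B_{\delta_0}(0)$ and on which the relevant metric-subregularity inequality holds. Replacing $f$ by $f+\iota_{\B_{\delta_0}(0)}$ (which alters neither the subdifferentials near $0$ nor the local conclusion) we may assume that $0$ is a global minimizer of a function bounded below, so that variational principles apply freely. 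The target is $f(x)\ge\tfrac{\alpha}{2}\,d(x;S)^2$ (respectively $d(x;\hat S)^2$) on a neighborhood of $0$, for each $\alpha$ in the stated range.

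\emph{The case of $\partial f$} ($\alpha<1/\kappa$). I would show that $0$ is a local minimizer of the lsc function $g:=f-\tfrac{\alpha}{2}\,d(\cdot;S)^2$. If not, then on a small ball $g$ has infimum $m<0$. Apply Ekeland's variational principle to $g$ with a parameter $\varepsilon>0$ destined to tend to $0$, taking the perturbation radius small enough that the produced point $z=z_\varepsilon$ is interior to the ball: then $g(z)\le m+\varepsilon$, and $z$ is an unconstrained local minimizer of $y\mapsto g(y)+\eta\,\|y-z\|$ with $\eta=\eta(\varepsilon)\to0$. Fermat's rule (Theorem~\ref{thm:sum_rule}) and the sum rule for a Lipschitz summand give $w\in\partial g(z)$ with $\|w\|\le\eta$; the difference rule (legitimate because $-\tfrac{\alpha}{2}d(\cdot;S)^2$ is locally Lipschitz), together with the bound $\|\xi\|\le\alpha\,d(z;S)$ for every $\xi\in\partial\big({-}\tfrac{\alpha}{2}d(\cdot;S)^2\big)(z)$ — valid because $\tfrac12 d(\cdot;S)^2$ is a Moreau envelope of $\iota_S$, so its limiting subgradients at $z$ are the vectors $z-s$ with $s$ a nearest point of $S$ to $z$ — yields $u\in\partial f(z)$ with $\|u\|\le\eta+\alpha\,d(z;S)$. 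Metric subregularity now gives $d(z;S)\le\kappa\,d(0;\partial f(z))\le\kappa\big(\eta+\alpha\,d(z;S)\big)$; equivalently — and this is where Lemma~\ref{lem:pert} enters, with $F:=\partial f$ and the perturbation $G(x):=\alpha\,d(x;S)\B$, which vanishes on $S=F^{-1}(0)$, has $\ell=\alpha<\kappa^{-1}$, and (by subregularity of $\partial f$) has $S$ again as the zero set of $F+G$ near $0$ — one concludes $d(z;S)\le(\kappa^{-1}-\alpha)^{-1}\eta\to0$ as $\varepsilon\to0$. But then $g(z)=f(z)-\tfrac{\alpha}{2}d(z;S)^2\ge-\tfrac{\alpha}{2}d(z;S)^2\to0$, so $g(z)>m+\varepsilon$ for $\varepsilon$ small, a contradiction. (The value $1/\kappa$ is the one suggested by the heuristic of a curve of steepest descent from $x$ to $S$: integrating the slope lower bound $d(0;\partial f)\ge\kappa^{-1}d(\cdot;S)$ along such a curve gives $f(x)\gtrsim\int_0^{d(x;S)}\kappa^{-1}s\,ds=\tfrac{1}{2\kappa}d(x;S)^2$.)

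\emph{The case of $\hat\partial f$} ($\alpha<1/(2\kappa)$). The preceding scheme breaks because the difference rule is not available for the \emph{regular} subdifferential: one has only $\hat\partial\big(f-\tfrac{\alpha}{2}d(\cdot;\hat S)^2\big)(z)\supseteq\hat\partial f(z)+\hat\partial\big({-}\tfrac{\alpha}{2}d(\cdot;\hat S)^2\big)(z)$, and the latter set is empty precisely where $d(\cdot;\hat S)$ has several nearest points in $\hat S$, which one cannot steer away from. Instead I would argue directly at a putative bad point: if the inequality fails there are $x_k\to0$ with $f(x_k)<\tfrac{\alpha}{2}\rho_k^2$, $\rho_k:=d(x_k;\hat S)>0$. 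Apply a smooth (Borwein--Preiss type) variational principle to $f$ at $x_k$ with perturbation radius $\lambda_k\in(0,\rho_k)$: this gives $z_k$ with $\|z_k-x_k\|\le\lambda_k$ — hence $d(z_k;\hat S)\ge\rho_k-\lambda_k>0$ — and a regular subgradient of $f$ at $z_k$ of norm at most $f(x_k)/\lambda_k$. Since $z_k\to0$, metric subregularity applies at $z_k$ and forces $\rho_k-\lambda_k\le d(z_k;\hat S)\le\kappa\,f(x_k)/\lambda_k$, i.e.\ $f(x_k)\ge\lambda_k(\rho_k-\lambda_k)/\kappa$; the choice $\lambda_k=\rho_k/2$ is optimal and gives $f(x_k)\ge\rho_k^2/(4\kappa)$, contradicting $f(x_k)<\tfrac{\alpha}{2}\rho_k^2$ as soon as $\alpha<1/(2\kappa)$. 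The factor $2$ lost relative to the limiting case is exactly the price of being unable to let the perturbation radius shrink past a fixed fraction of $\rho_k$.

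\emph{The main obstacle.} The delicate core of both parts is the subdifferential bookkeeping across the term $d(\cdot;S)^2$: one must exploit that $\tfrac12 d(\cdot;S)^2$, being a Moreau envelope, has all its subgradients in the ball of radius $d(\cdot;S)$, and one must keep careful track of \emph{which} subdifferential passes through a difference of functions — this asymmetry is exactly what separates the two cases and pins down the constants $1/\kappa$ and $1/(2\kappa)$. Secondary but not entirely mechanical are the variational-principle technicalities: boundedness below, and arranging that the variational point lands in the interior of the reference ball, so that the subregularity hypothesis is in force there and Fermat's rule carries no normal-cone contribution.
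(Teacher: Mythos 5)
Your argument for the first assertion (regular subdifferential, $\alpha<\frac{1}{2\kappa}$) is essentially the paper's: Ekeland applied to $f$ centered at a putative bad point $x_k$ with radius $\lambda_k=\tfrac12 d(x_k;\hat S)$, a fuzzy sum rule to extract a nearby regular subgradient of size roughly $f(x_k)/\lambda_k$, and the subregularity inequality; the optimization of $\lambda(\rho-\lambda)$ is exactly where the paper's constant $\frac{1}{4\kappa}=\max_\lambda \lambda(1-\lambda)/\kappa$ comes from. That part is sound (only be careful that a literal Borwein--Preiss perturbation can cost an extra factor in the gradient bound; Ekeland plus the fuzzy sum rule, as in the paper, gives the clean constant).

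The second assertion is where there is a genuine gap. Your one-shot scheme needs a point $z$ satisfying \emph{simultaneously}: (a) $g(z)\le m+\varepsilon$ with $m=\inf g<0$ \emph{fixed}, (b) $d\big(0;\partial g(z)\big)\le\eta$ with $\eta\to 0$, and (c) $z$ interior to the ball on which $f\ge 0$ and the subregularity estimate hold, with no normal-cone contribution in Fermat's rule. Ekeland cannot deliver all three: the starting point with $g(x_0)\le m+\varepsilon$ may sit arbitrarily close to the boundary of the reference ball (nothing prevents near-minimizers of $g$ from clustering there), so either the constraint's normal cone contaminates the estimate $d(0;\partial g(z))\le\eta$, or you must take the Ekeland radius comparable to the distance to the boundary --- but then the excess you can certify is only $O(\alpha\,\delta^2)$ over a ball of radius $\delta$ (via $g\ge -\tfrac{\alpha}{2}d(\cdot;S)^2\ge-\tfrac{\alpha}{2}\delta^2$), which makes $\eta$ of order $\alpha\delta$ rather than $o(1)$. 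Running your estimates with $\eta\sim\alpha\delta$ yields only $\inf_{\B_\delta}g\ge -C\delta^2$ for some constant $C$ depending on $\alpha,\kappa$ --- a quantitative improvement that must then be \emph{iterated}, not the desired contradiction. This is precisely why the paper does not argue in one shot: it first proves the weaker $\alpha<\frac{1}{2\kappa}$ growth (by the localized Ekeland argument of part one, which is immune to the boundary issue because the radius shrinks with $d(u_k;S)$), and only then bootstraps, tilting $f\mapsto f-\frac{1-\delta}{4\kappa}d(\cdot;S)^2$, invoking Lemma~\ref{lem:pert} to recompute the subregularity modulus of the tilted subdifferential, and iterating so that the growth constant converges up to $\frac{1}{2\kappa}$ (i.e.\ $\alpha\uparrow\kappa^{-1}$). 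Your subdifferential bookkeeping for $d(\cdot;S)^2$ (exact sum rule with a Lipschitz summand, subgradient norm bounded by the local Lipschitz modulus $\alpha\,d(z;S)$) is correct and is the same ingredient the paper uses inside Lemma~\ref{lem:pert}; the localization you dismiss as a secondary technicality is in fact the obstruction that forces the iterative structure of the proof.
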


\begin{proof}
We assume for simplicity  that $\xb=0$ and $f(\xb)=0$. Define for notational convenience $S=(\hat{\partial} f)^{-1}(0)$.
We must estimate the lower bound of $\al>0$ with the property that there is a
sequence $u_k$  satisfying
\begin{equation}\label{eqn:contr}
u_k\to 0,\quad u_k\notin S,\quad   f(u_k)\le\frac{\al}{2} d(u_k;S)^2.
\end{equation}
So fix $\alpha\in (0,\frac{1}{2\kappa})$ and suppose there is such a sequence. Define $\la\in (0,\frac{1}{2})$ by the formula $\frac{\al}{2} = \frac{\la^2}{{\kappa}}$. By Ekeland's principle
for any $k$ there exists $w_k$ such that $\| w_k-u_k\|\le\la  d(u_k; S)$,
$f(w_k)\le f(u_k)$ and $g_k(x)=f(x)+\frac{\la}{{\kappa}} d(u_k; S)\| x- w_k\|$ attains a minimum
at $w_k$.
We see that $g_k$ is a sum of an lsc and a convex continuous function. Consequently, we can find for any $k$ and any $\ep_k> 0$ a point $x_k$
such that
$|f(x_k)-f(w_k)|<\frac{\ep_k}{{\kappa}}d(u_k;S)$, $\| x_k-w_k\|<\frac{\ep_k}{{\kappa}}d(u_k;S)$
and $0\in\hat{\sd} f(x_k) + \frac{\la+\ep_k}{{\kappa}}d(u_k; S)\B$. We deduce $d(0;\hat{\sd} f(x_k))\le \frac{\la+\ep_k}{{\kappa}} d(u_k; S)$.
Observe $d(x_k;S)\ge d(u_k;S)-\| x_k-u_k\|\ge (1-(\la+\frac{\ep_k}{{\kappa}}))d(u_k;S)$, that is
$$
d(0;\hat{\sd} f(x_k))\le \frac{\la+\ep_k}{{\kappa}-({\kappa}\la+\ep_k)}d(x_k;S).
$$
Letting $\ep_k\to 0$ and taking into account the inequality $\lambda <\frac{1}{2}$, 
we arrive at a contradiction to the subregularity of $\hat{\sd} f$.


Suppose now that $\partial f$ is subregular at $(\bar{x},0)$. Then the same argument, as above, shows that the theorem holds for any $\al<\frac{1}{2\kappa}$. 
Fix now a real number $\delta\in (0,1)$ and define $f_1(x) := f(x) -\frac{1-\delta}{4\kappa}d(x;S)^2$. By what we have already proved, zero is a local minimizer of $f_1$. Observe $\sd f_1(x)\subset \sd f(x)-\frac{1-\delta}{2\kappa} d(x;S)\sd d(\cdot; S)(x)$. By Lemma~\ref{lem:pert}, the subdifferential
 $\sd f_1$ is subregular at $(0,0)$ with modulus
$\frac{2\kappa}{1+\delta}$. It follows according to what has just been proved that for $\al =\frac{1+\delta-\gamma}{4\kappa}$, with $\gamma>0$ arbitrarily small, there exists $\tau_1>0$ satisfying
 $f_1(x)\ge \frac{1+\delta-\gamma}{8\kappa}d(x;S)^2$ for all $ x \in \tau_1\B$. Hence for such $x$, we have 
$f(x)\geq \Big(\frac{1-\delta}{4}+\frac{1+\delta-\gamma}{8}\Big)\frac{1}{\kappa}d(x;S)^2.$
Repeating this procedure inductively and taking $\delta$ and $\gamma$ arbitrarily close to zero, we obtain the result. \qed
\end{proof}

It is worth noting that the proof of the previous theorem easily extends to infinite dimensional spaces and to virtually any reasonable subdifferential on those spaces. As an aside, we now briefly elaborate on this further. The reader may safely skip this discussion. We begin with some notation. We will let $\X$ denote a Banach space with norm $\|\cdot\|$, while the dual space of $\X$ will be written as $\X^{*}$. The closed unit ball in $\X$ will be denoted by $\B$, while the closed unit ball of $\X^*$ will be written as $\B_{\X^*}$.

There are several types of subdifferentials used in variational
analysis in Banach spaces (proximal, Fr\'{e}chet, limiting Fr\'{e}chet, Dini-Hadamard. G-subdifferential,
generalized gradient). We do not need a general definition here: an interested reader may look into \cite{th_sub_ioffe}. The important point is that most of them can be effectively used 
only in certain classes of Banach spaces. This observation is formally represented by the following
definition playing an important  role in the general theory. 
\begin{defn}[Trustworthy spaces]
{\rm A subdifferential $\sd$ {\it can be  trusted} on a Banach space $\mathcal{X}$ if for any lsc function $f\colon \mathcal{X}\to\overline{\R}$,
any point $\xb\in\dom f$ and any function $g\colon \mathcal{X}\to\overline{\R}$ which is convex and continuous near $\xb$ the following holds:
{\it if $f+g$ attains a local minimum at $\xb$, then for any $\ep>0$ there are $x,u,x^*,u^*$
such that both $x$ and $u$ are $\ep$-close to $\xb$, $f(x)$ is $\ep$-close to $f(\xb)$,
$x^*\in\sd f(x),\; u^*\in\sd g(u)$
and $\| x^*+u^*\|<\ep$.}}
\end{defn}

The Fr\'echet and the limiting Fr\'echet subdifferential can be trusted on Asplund spaces and only on them;
the Dini-Hadamard subdifferential can be trusted on G\^ateaux smooth spaces; the G-subdifferential
and the generalized gradient of Clarke can be trusted on all Banach spaces. See \cite{th_sub_ioffe} for details.

Let us finally agree to say that the {\it exact calculus} holds for a given subdifferential $\sd$ in a 
given Banach space $\mathcal{X}$ if the inclusion
$$
\sd(f+g)(x)\subset \sd f(x)+\sd g(x)
$$
holds whenever $f\colon\mathcal{X}\to\overline{\R}$ is lsc and $g\colon\mathcal{X}\to\overline{\R}$ is Lipschitz continuous near $x$. This is the case when $\partial$ is a ``robust" subdifferential: limiting Fr\'{e}chet subdifferential in an Asplund space, G-subdifferential or generalized gradient on any Banach space.
An appropriate restatement of Theorem~\ref{thm:gen} (with identical proof) in the infinite dimensional setting follows.

\begin{thm}[Subregularity of subdifferentials in Banach spaces] {\hfill \\}
Consider an lsc function $f\colon\mathcal{X}\to\overline{\R}$, defined on a Banach space $\mathcal{X}$, which has a local minimum at $\xb\in\dom f$. Let $\sd$ be a subdifferential trusted on $X$ and suppose that $\sd f$ is metrically subregular at $(\xb,0)$ with modulus $\kappa >0$. Then for any real number $\alpha\in (0,\frac{1}{2\kappa})$ there exists $\ep >0$ such that the inequality 
$$
f(x)\ge f(\xb) +\frac{\alpha}{2} \,d\big(x;(\partial f)^{-1}(0)\big)^2\quad \textrm{ holds for all } x\in\B_{\ep}(\bar{x}).
$$
Moreover, if the exact calculus holds, then the conclusion is valid for any
$\alpha\in (0,\frac{1}{\kappa})$.
\end{thm}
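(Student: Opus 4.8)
The assertion is that the proof of Theorem~\ref{thm:gen} goes through essentially word for word, so my plan is to indicate which two finite-dimensional ingredients get replaced and to sanity-check that nothing else uses finite-dimensionality. As before, normalize $\bar x=0$ and $f(\bar x)=0$, and set $S:=(\partial f)^{-1}(0)$ (so $0\in S$ because $0$ is a local minimizer of $f$).

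\emph{First inequality.} Fix $\alpha\in(0,\tfrac{1}{2\kappa})$, suppose toward a contradiction there is a sequence $u_k\to 0$ with $u_k\notin S$ and $f(u_k)\le\tfrac{\alpha}{2}d(u_k;S)^2$, and choose $\lambda\in(0,\tfrac12)$ with $\tfrac{\alpha}{2}=\tfrac{\lambda^2}{\kappa}$. Ekeland's variational principle holds in every complete metric space, hence in the Banach space $\mathcal{X}$, and produces $w_k$ with $\|w_k-u_k\|\le\lambda\, d(u_k;S)$, $f(w_k)\le f(u_k)$, such that $g_k(x):=f(x)+\tfrac{\lambda}{\kappa}d(u_k;S)\|x-w_k\|$ attains a local minimum at $w_k$. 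Write $g_k=f+h_k$ with $h_k$ convex and continuous. The hypothesis that $\partial$ is trusted on $\mathcal{X}$ now supplies, for each $\varepsilon_k>0$, points $x_k,u_k'$ and functionals $x_k^*\in\partial f(x_k)$, $u_k^*\in\partial h_k(u_k')$ with $x_k,u_k'$ near $w_k$, $f(x_k)$ near $f(w_k)$ and $\|x_k^*+u_k^*\|<\varepsilon_k$; since $\partial h_k(u_k')\subset\tfrac{\lambda}{\kappa}d(u_k;S)\,\B_{\X^*}$, we recover $d(0;\partial f(x_k))\le\|x_k^*\|\le\varepsilon_k+\tfrac{\lambda}{\kappa}d(u_k;S)$. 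From here the estimate $d(0;\partial f(x_k))\le\tfrac{\lambda+\varepsilon_k}{\kappa-(\kappa\lambda+\varepsilon_k)}d(x_k;S)$ of Theorem~\ref{thm:gen} follows verbatim (using $d(x_k;S)\ge d(u_k;S)-\|x_k-u_k\|$ and rescaling $\varepsilon_k$), and letting $\varepsilon_k\downarrow 0$ contradicts the metric subregularity of $\partial f$ because $\lambda<\tfrac12$. This is the single place the trusted-subdifferential hypothesis enters, in exactly the role played by the fuzzy ``lsc $+$ convex continuous'' sum rule in finite dimensions.

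\emph{Sharper inequality under the exact calculus.} Here I copy the bootstrap of Theorem~\ref{thm:gen}, after observing that Lemma~\ref{lem:pert} is a purely metric statement --- its proof only manipulates distances to sets and values of $F$ and $G$ --- and therefore holds for set-valued maps between normed spaces, with $\B$ read as $\B_{\X^*}$ on the target side. Fix $\delta\in(0,1)$, put $f_1:=f-\tfrac{1-\delta}{4\kappa}d(\cdot;S)^2$, and note $\psi:=-\tfrac{1-\delta}{4\kappa}d(\cdot;S)^2$ is locally Lipschitz. The exact calculus gives $\partial f_1(x)\subset\partial f(x)+\partial\psi(x)$, and the chain rule together with the $1$-Lipschitzness of $d(\cdot;S)$ gives $\partial\psi(x)\subset\tfrac{1-\delta}{2\kappa}d(x;S)\,\B_{\X^*}$, which is $\{0\}$ on $S$. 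Lemma~\ref{lem:pert} with $\ell=\tfrac{1-\delta}{2\kappa}<\kappa^{-1}$ makes $\partial f+\partial\psi$, hence $\partial f_1$, metrically subregular at $(0,0)$ with modulus $\tfrac{2\kappa}{1+\delta}$; moreover $(\partial f_1)^{-1}(0)\subset S$ near $0$ (if $0\in\partial f_1(x)$ then $d(0;\partial f(x))\le\tfrac{1-\delta}{2\kappa}d(x;S)$, so subregularity of $\partial f$ forces $d(x;S)=0$), which lets one pass from $d(x;(\partial f_1)^{-1}(0))$ to $d(x;S)$ just as in Theorem~\ref{thm:gen}. Applying the first inequality to $f_1$ and iterating while sending $\delta$ and the slack $\gamma$ to $0$ yields $\alpha$ arbitrarily close to $\tfrac1{\kappa}$.

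The main obstacle is entirely one of bookkeeping: confirming that the two finite-dimensional tools of Theorem~\ref{thm:gen} --- the fuzzy sum rule and the norm bound on subdifferentials of Lipschitz functions feeding Lemma~\ref{lem:pert} --- really do have the stated Banach-space substitutes (the trusted-subdifferential axiom and the exact-calculus hypothesis), and that Ekeland's principle, the chain rule bound on $\partial\psi$, and the distance manipulations are all dimension-free. Once that is checked, the arithmetic in the proof of Theorem~\ref{thm:gen} carries over unchanged.
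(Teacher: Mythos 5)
Your proposal is correct and follows exactly the route the paper intends: the authors state this theorem "with identical proof" to Theorem~\ref{thm:gen}, and you have accurately identified the only two places where finite-dimensionality entered (the fuzzy sum rule for an lsc plus convex continuous sum, replaced by the trusted-subdifferential axiom, and the sum rule in the bootstrap step, replaced by the exact-calculus hypothesis), while checking that Ekeland's principle, Lemma~\ref{lem:pert}, and the distance estimates are dimension-free. Your added observation that $(\partial f_1)^{-1}(0)\subset S$ near $\bar x$ (needed to convert the growth estimate for $f_1$ back into one in terms of $d(x;S)$) is a detail the paper leaves implicit, and it is verified correctly.
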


\section{Necessary and sufficient conditions for optimality}\label{sec:nec_suf}
To motivate the discussion, consider a $C^2$-smooth function $f\colon\R^n\to\R$ and a critical point $\bar{x}$ of $f$. Classically then for $\bar{x}$ to be a local minimizer of $f$, the Hessian $\nabla^2 f(\bar{x})$ must be positive semi-definite. On the other hand, if $\nabla^2 f(\bar{x})$ is positive definite, then $\bar{x}$ is guaranteed to be a local minimizer. 
Seeking to extend such a characterization to the nonsmooth setting, it is 
natural to use a second-order subdifferential construction.
However, this seems to invariably lead to characterizations of ``strong metric regularity'' of the subdifferential \cite[Theorem 1.3]{tilt}, \cite{DL_tilt}. To illustrate the limitations of this approach, consider the convex function of two variables $f(x,y)=(|x|+|y|)^2$. Here, the origin is a strong local minimizer, and therefore  Theorem~\ref{thm:mainres} implies that the subdifferential $\partial f$ is strongly subregular at $(\bar{x},0)$. On the other hand, an easy computation shows that $\partial f$ is not even metrically regular at $\bar{x}$, and therefore second-order conditions involving the coderivative of the subdifferential are not applicable. 
In line with the much weaker subregularity concept --- the focus of the current manuscript --- we take a different approach, one that would in particular be applicable to the aforementioned function. To motivate our main result, we first revisit the standard second-order optimality conditions and restate them in terms of the gradient itself, rather than the Hessian. In what follows, we let the symbol $I$ denote the identity mapping.
\begin{prop}[Standard second-order conditions revisited]{\hfill \\ }
Consider a $C^2$-smooth function $f$ on $\R^n$ and let $\bar{x}$ be a critical point. Then the following are true.
\begin{enumerate}
\item  The matrix $\nabla^2 f(\bar{x})$ is positive semi-definite if and only if the mapping
 \begin{equation}\label{eq:nec}
\nabla f+r(I-\bar{x}) \quad\textrm{ is strongly subregular at } (\bar{x},0) \textrm{ for all } r>0.
\end{equation}
\item The matrix $\nabla^2 f(\bar{x})$ is positive definite if and only if there exist constants $\varepsilon >0$, $\lambda >0$, and $r\geq 0$ such that the inequality 
\begin{equation}\label{eq:suff}
\|\nabla f(x)+r(x-\bar{x})\|\geq (\lambda +r)\|x-\bar{x}\|\quad \textrm{ holds for all }  x\in\B_{\varepsilon}(\bar{x}).
\end{equation}
\end{enumerate}
\end{prop}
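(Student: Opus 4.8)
The plan is to reduce both equivalences to the first-order Taylor expansion of $\nabla f$ at the critical point and then translate everything into spectral statements about the Hessian. Assume without loss of generality that $\bar{x}=0$ and write $A:=\nabla^2 f(0)$, a symmetric matrix. Since $\bar{x}$ is critical, $C^2$-smoothness gives $\nabla f(x)=Ax+o(\|x\|)$ as $x\to 0$, and hence $\nabla f(x)+r(x-\bar{x})=(A+rI)x+o(\|x\|)$ for every fixed $r\ge 0$.

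For part~1, I would first record the elementary fact that a $C^1$ mapping $g\colon\R^n\to\R^n$ with $g(0)=0$ is strongly metrically subregular at $(0,0)$ if and only if $\nabla g(0)$ is nonsingular: if $\nabla g(0)$ is nonsingular then $\|g(x)\|\ge\|\nabla g(0)x\|-o(\|x\|)\ge\tfrac12\|\nabla g(0)^{-1}\|^{-1}\|x\|$ for small $x$, while if $0\ne v\in\ker\nabla g(0)$ then $\|g(tv)\|/\|tv\|\to 0$ as $t\searrow 0$, which rules out subregularity. Applying this with $g=\nabla f+r(I-\bar{x})$, whose Jacobian at the origin is $A+rI$, statement \eqref{eq:nec} says precisely that $A+rI$ is nonsingular for every $r>0$; equivalently, $-r$ is never an eigenvalue of $A$ for $r>0$; equivalently, $A$ has no strictly negative eigenvalue. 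Since $A$ is symmetric, this is exactly positive semidefiniteness, so part~1 is done.

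For part~2, the ``only if'' implication is a direct estimate: if $A$ is positive definite, take $r=0$ and $\lambda=\tfrac12\lambda_{\min}(A)>0$, and use $\nabla f(x)=Ax+o(\|x\|)$ together with $\langle Ax,x\rangle\ge\lambda_{\min}(A)\|x\|^2$ to verify \eqref{eq:suff} on a small ball. For the ``if'' implication, suppose \eqref{eq:suff} holds with constants $\varepsilon,\lambda>0$ and $r\ge 0$. Restricting $x$ to a ray $x=tv$ with $\|v\|=1$, dividing by the appropriate power of $t$, and letting $t\searrow 0$ while using $\tfrac1t\nabla f(tv)\to Av$, the inequality \eqref{eq:suff} passes in the limit to the bound $\langle(A+rI)v,v\rangle\ge(\lambda+r)\|v\|^2$ for all $v$, that is $A+rI\succeq(\lambda+r)I$; hence $A\succeq\lambda I\succ 0$. (Note that for smooth $f$ the parameter $r$ is in fact superfluous in part~2 --- $r=0$ always works --- but it is exactly the ingredient that survives the passage to the multivalued subdifferential in the nonsmooth theorem that follows.)

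The main obstacle is the limiting step in the ``if'' direction of part~2. One must make sure the $o(\|x\|)$ remainder in the Taylor expansion is controlled uniformly as $v$ ranges over the unit sphere --- which it is, since $\nabla f(x)-Ax$ is a genuine, direction-free $o(\|x\|)$ by $C^2$-smoothness --- that the strict inequality $\lambda>0$ survives the limit, and, crucially, that what one extracts from \eqref{eq:suff} is a lower bound on the quadratic form $\langle(A+rI)v,v\rangle$ rather than merely on $\|(A+rI)v\|$; only the former is equivalent to positive definiteness of the symmetric matrix $A$. This last point is absent from part~1, where only injectivity of $A+rI$ --- not its sign --- is at issue.
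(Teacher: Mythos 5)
Part 1 and the ``only if'' half of part 2 are correct and take essentially the paper's route: expand $\nabla f(x)+r(x-\bar x)=(\nabla^2 f(\bar x)+rI)(x-\bar x)+o(\|x-\bar x\|)$, use that a $C^1$ map is strongly subregular at a zero exactly when its Jacobian there is nonsingular, and observe that for the symmetric matrix $A=\nabla^2 f(\bar x)$ nonsingularity of $A+rI$ for every $r>0$ is positive semidefiniteness.

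The gap is in the ``if'' half of part 2. Dividing \eqref{eq:suff} along the ray $x=\bar x+tv$ by $t$ and letting $t\searrow 0$ yields $\|(A+rI)v\|\ge\lambda+r$ for every unit vector $v$, i.e.\ a lower bound on the smallest \emph{singular value} of $A+rI$. You assert that the limit instead produces $\langle(A+rI)v,v\rangle\ge(\lambda+r)\|v\|^2$, and in your closing paragraph you correctly identify that only this quadratic-form bound is equivalent to positive definiteness --- but you never supply the passage from the norm bound to the quadratic-form bound, and none exists: the norm bound only forces every eigenvalue $\mu$ of $A$ to satisfy $|\mu+r|\ge\lambda+r$, which permits $\mu\le-\lambda-2r<0$. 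In fact the implication you are trying to prove fails as literally stated: for $f(x)=-\tfrac12\|x\|^2$ one has $\nabla f(x)=-x$, so \eqref{eq:suff} holds with $r=0$ and $\lambda=1$, yet the Hessian is negative definite. (The paper's own argument for this direction has the same blind spot, writing $\|(\nabla^2 f(\bar x)+rI)(z_t-\bar x)\|=(\lambda'+r)t+o(t)$ where the correct quantity is $|\lambda'+r|t+o(t)$; the conclusion requires an additional hypothesis excluding eigenvalues $\le -r$, such as the parabolic-minorant/gauge condition that appears in Theorem~\ref{thm:nec_suff}.) So your part 2 ``if'' direction cannot be completed as written.
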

\begin{proof}
Clearly the Hessian $\nabla^2 f(\bar{x})$ is positive semi-definite, if and only if $\nabla^2 f(\bar{x})+rI$ is nonsingular for all $r >0$.
Since the matrix $\nabla^2 f(\bar{x})+rI$ is the Jacobian of the mapping $x\mapsto \nabla f(x)+r(x-\bar{x})$, the first claim follows.

To see the second claim, observe
$$\nabla f(x)=\nabla^2 f(\bar{x})(x-\bar{x})+o(\|x-\bar{x}\|),$$
and hence 
$$\|\nabla f(x)+r(x-\bar{x})\|=\|(\nabla^2 f(\bar{x})+rI) (x-\bar{x})\|+o(\|x-\bar{x}\|)$$
for all real $r$. Consequently if $\nabla^2 f(\bar{x})$ is positive definite, then there exists $\varepsilon >0$ so that $\|\nabla f(x)+r(x-\bar{x})\|\geq (\lambda +r)\|x-\bar{x}\|$ for all $x\in\B_{\varepsilon}(\bar{x})$, where $\lambda >0$ is slightly smaller than the minimal eigenvalue of $\nabla^2 f(\bar{x})$.
To see the converse, suppose \eqref{eq:suff} holds. Then if the Hessian $\nabla^2 f(\bar{x})$ is not positive semi-definite, we may choose an eigenvalue $\lambda' <0$ and a corresponding  eigenvector $v\in\R^n$ with $\|v\|=1$. Then in terms of $z_t=\bar{x}+tv$, we have
\begin{align*}
(\lambda +r)t&\leq \|\nabla f(z_t)+r(z_t-\bar{x})\|=\|(\nabla^2 f(\bar{x})+rI) (z_t-\bar{x})\|+o(\|z_t-\bar{x}\|)\\
&\leq (\lambda'+r) t+ o(t)
\end{align*}
a contradiction.
\qed 
\end{proof}


The necessary condition \eqref{eq:nec} and the sufficient condition \eqref{eq:suff} for optimality of $C^2$-functions are rather striking since they do not invoke any second-order construction. We will see shortly that these conditions, simply restated in subdifferential terms, ``almost'' serve the same purpose for nonsmooth functions. This generalization is surprisingly subtle, however. To illustrate, consider the univariate $C^1$-smooth function $f(x)=x^{\frac{3}{2}}$. Clearly, the origin is a critical point of $f$, and moreover it is easy to see that condition  \eqref{eq:suff} holds. On the other hand, the origin is not a local minimizer of $f$. The difficulty, absent in the $C^2$-smooth setting, is due to nonexistence of parabolic  minorants. Hence an extra condition, in addition to  \eqref{eq:nec} and \eqref{eq:suff},  addressing this situation will be required. The semi-algebraic setting once again provides a nice motivation for the development. To this end, recall that the {\em gauge} of any closed set $K\subset \R^n$ is the function 
$$\Gamma_K(x)=\inf\{\lambda >0: x\in\lambda K\}.$$
Whenever $\Gamma_K(x)$ is finite, the infimum in the definition is attained.
For any closed-valued set-valued mapping $K\colon\R^n\rightrightarrows\R^m$, we abuse notation slightly and define the {\em parametric gauge function} to be
$$\Gamma_K(x;y):=\Gamma_{K(x)}(y),$$
or more precisely 
$\Gamma_K(x;y)=\inf\{\lambda >0: y\in\lambda K(x)\}$.
The gauge of the subdifferential arises naturally when considering minorants of semi-algebraic functions.

\begin{lem}[Minorants of semi-algebraic functions]\label{lem:lower} \hfill \\
Consider semi-algebraic functions $f\colon\R^n\to\overline{\R}$ and
$\psi\colon\R_+\to\R$, satisfying $$f(\bar{x})=\psi(0)\qquad\textrm{ and }\qquad f(x) \geq \psi(\|x-\bar{x}\|)~~\textrm{ for all }x \textrm{ near } \bar{x}.$$ Then the inequality
$$\frac{\psi'(\|x-\bar{x}\|)}{\|x-\bar{x}\|} \leq -\frac{1}{\Gamma_{\partial f}(x;\bar{x}-x)}$$
holds for all $x \textrm{ near }\bar{x}$ for which $\Gamma_{\partial f}(x;\bar{x}-x)$ is finite.
\end{lem}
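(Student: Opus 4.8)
I would argue by contradiction, using semi-algebraic curve selection to extract a one-dimensional situation and then the chain rule Lemma~\ref{lem:sa_chain} to read off the relevant derivative identity. After translating, assume $\bar x=0$ and $f(0)=\psi(0)=0$, and suppose the asserted inequality fails. Then the set
$$
V:=\Big\{x:\ \Gamma_{\partial f}(x;-x)<\infty,\ \ \psi\ \text{differentiable at}\ \|x\|,\ \ \tfrac{\psi'(\|x\|)}{\|x\|}>-\tfrac{1}{\Gamma_{\partial f}(x;-x)}\Big\}
$$
is semi-algebraic and has $0$ in its closure. By semi-algebraic curve selection there is a semi-algebraic curve landing at $0$ whose trace near $0$ lies in $V$; applying Theorem~\ref{thm:mon} to the (semi-algebraic) function measuring distance to $0$ along this curve — it tends to $0$, hence is strictly increasing after reparametrization — I may reparametrize to obtain a semi-algebraic curve $x(\cdot)$ with $\|x(s)\|=s$ and $x(s)\in V$ for all $s\in(0,\ep)$.

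\textbf{Extracting the subgradient and applying the chain rule.} Since $\Gamma(s):=\Gamma_{\partial f}(x(s);-x(s))$ is finite, the gauge infimum is attained, so $v(s):=-x(s)/\Gamma(s)\in\partial f(x(s))$, and $v(\cdot)$ is semi-algebraic. Extending $x$ to $[0,\ep)$ by $x(0)=0$ yields a semi-algebraic curve into $\dom f$ (using $f(0)<\infty$ and $\partial f(x(s))\neq\emptyset$ for $s>0$), so Lemma~\ref{lem:sa_chain} applies: shrinking $\ep$, the maps $x$, $f\circ x$ and $\psi$ are $C^2$ on $(0,\ep)$ and, for $s\in(0,\ep)$,
$$
(f\circ x)'(s)=\langle v(s),\dot x(s)\rangle=-\tfrac{1}{\Gamma(s)}\langle x(s),\dot x(s)\rangle=-\tfrac{1}{\Gamma(s)}\cdot\tfrac12\tfrac{d}{ds}\|x(s)\|^2=-\tfrac{s}{\Gamma(s)},
$$
where I used $\|x(s)\|^2=s^2$. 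Meanwhile the definition of $V$ gives $\psi'(s)>-s/\Gamma(s)=(f\circ x)'(s)$ on $(0,\ep)$, so $\psi-f\circ x$ is strictly increasing there.

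\textbf{Closing by integration.} Using that $f$ is lsc (and, as is standing in this part of the paper, continuous at $\bar x$) together with $f(x(s))\ge\psi(s)\to\psi(0)=0$, I conclude $\lim_{s\searrow0}(f\circ x)(s)=f(\bar x)=0$, and likewise $\lim_{s\searrow0}\psi(s)=\psi(0)=0$. Then for $\sigma\in(0,\ep)$, integrating $\psi'>(f\circ x)'$ over $(0,\sigma)$ gives $\psi(\sigma)=\psi(\sigma)-0>(f\circ x)(\sigma)-0=f(x(\sigma))$, contradicting $f(x(\sigma))\ge\psi(\|x(\sigma)\|)=\psi(\sigma)$. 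This completes the proof.

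\textbf{Expected main obstacle.} The genuine difficulty is exactly this last limiting step: one must ensure that the \emph{equality at $\bar x$} between $f$ and its radial minorant is not destroyed in the limit along the selected curve, i.e.\ that $\lim_{s\searrow0}f(x(s))=f(\bar x)$ (and $\lim_{t\searrow0}\psi(t)=\psi(0)$). If $f$ were allowed to jump upward near $\bar x$ while a radially inward subgradient persisted, the integration would pick up a positive defect and the contradiction would evaporate — this is the same pathology that the minimal continuity hypothesis on semi-algebraic functions is designed to exclude, and under that hypothesis the argument closes cleanly. The remaining ingredients — reparametrizing by distance, differentiability of $\psi$ at the radii in question, finiteness of $f$ along the curve, and semi-algebraicity of $V$, $\Gamma(\cdot)$ and $v(\cdot)$ — are routine consequences of Theorems~\ref{thm:mon} and \ref{thm:sel} together with Tarski--Seidenberg.
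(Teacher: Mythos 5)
Your proposal follows essentially the same route as the paper's proof: argue by contradiction, use Theorem~\ref{thm:sel} to select a semi-algebraic curve $x(t)$ with $\|x(t)-\bar x\|=t$ along which the inequality fails together with the attaining value $\lambda(t)=\Gamma_{\partial f}(x(t);\bar x-x(t))$, observe that $-\lambda(t)^{-1}(x(t)-\bar x)\in\partial f(x(t))$, and feed this into Lemma~\ref{lem:sa_chain} to get $(f\circ x)'(t)=-t/\lambda(t)$. The only genuine divergence is the endgame. The paper differentiates the pointwise comparison $(f\circ x)(t)\ge\psi(t)$, invoking Theorem~\ref{thm:mon} to conclude $(f\circ x)'(t)\ge\psi'(t)$ for small $t$, and obtains an immediate pointwise contradiction with $\psi'(t)/t>-1/\lambda(t)$. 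You instead integrate the reversed inequality $\psi'>(f\circ x)'$ from $0$ to $\sigma$, which obliges you to identify the boundary value $\lim_{s\searrow 0}f(x(s))=f(\bar x)$.

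That limit is exactly where your proof has a gap: the lemma carries no standing continuity hypothesis, and lower semicontinuity together with $f(x(s))\ge\psi(s)\to\psi(0)$ only yields $\liminf_{s\searrow 0}f(x(s))\ge f(\bar x)$, never the reverse inequality. Your ``expected main obstacle'' is therefore real, but your resolution (appealing to continuity ``standing in this part of the paper'') is not available for the statement as given. Two caveats temper this criticism. First, the paper's own shortcut is subject to the same issue: a nonnegative, monotone, semi-algebraic function on $(0,\eta)$ need not have nonnegative derivative unless its limit at $0$ is $0$, so the deduction $(f\circ x)'\ge\psi'$ from $(f\circ x)\ge\psi$ also tacitly uses $f(x(t))\to f(\bar x)$. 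Second, the issue is not cosmetic: for $f(0)=0$, $f(x)=1-\|x\|^2$ for $x\ne 0$, and $\psi(t)=-t^2/100$, all hypotheses of the lemma hold, yet $\Gamma_{\partial f}(x;-x)=1/2$ while $\psi'(t)/t=-1/50>-2$, so some continuity of $f(x(t))$ at $t=0$ must be imposed (as it effectively is, via subdifferential continuity, where the lemma is used in Theorem~\ref{thm:nec_suff}). In short, you have correctly located the one delicate point of the argument; you just cannot discharge it by citing a hypothesis that is not in the statement.
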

\begin{proof}
Define the semi-algebraic set-valued mapping $H\colon\R_+\rightrightarrows\R^n\times\R_+$ by
$$H(t)=\{(x,\lambda): \Gamma_{\partial f}(x;\bar{x}-x)= \lambda \quad\textrm{and}\quad \|x-\bar{x}\|=t\}.$$	
Suppose for the sake of contradiction that the lemma is false. Then an easy argument using Theorem~\ref{thm:sel} immediately implies that there exists a semi-algebraic selection $(x(t),\lambda(t))$ of $H$, defined at least on some open interval $(0,\eta)$, satisfying 
$\frac{\psi'(t)}{t} > -\frac{1}{\lambda(t)}$.
Observe that for all sufficiently small $t >0$, the inequality 
$$(f\circ x)(t)\geq \psi(t) \qquad\textrm{ holds}.$$
Consequently, appealing to Theorem~\ref{thm:mon}, we deduce 
$$(f\circ x)'(t)\geq \psi'(t) \qquad \textrm{ for all small } t>0.$$
Combining this inequality with Lemma~\ref{lem:sa_chain}, we obtain 
\begin{align*}
\psi'(t)&\leq \langle \partial f(x(t)),\dot{x}(t)\rangle=-\lambda(t)^{-1}\langle x(t)-\bar{x},\dot{x}(t)\rangle\\ &=-\frac{1}{2\lambda(t)}\frac{d}{dt}\|x(t)-\bar{x}\|^2=-\frac{1}{\lambda(t)}t.
\end{align*}
This is a clear contradiction and the result follows. \qed
\end{proof}

Specializing the previous lemma to parabolic minorants yields the following important observation.

\begin{cor}[Parabolic minorants of semi-algebraic functions]\label{cor:quad} \hfill \\
Consider a semi-algebraic function $f\colon\R^n\to\overline{\R}$ that is finite at a point $\bar{x}$, and let $r>0$ be a real number. Suppose that for some $\varepsilon >0$ we have $$f(x) \geq f(\bar{x})-\frac{r}{2}\|x-\bar{x}\|^2 \qquad\textrm{ for all } x\in\B_{\varepsilon}(\bar{x}).$$ Then the inequality
$$\Gamma_{\partial f}(x;\bar{x}-x) \geq \frac{1}{r}\qquad
\textrm{ holds for all } x \textrm{ near }\bar{x}.$$	
\end{cor}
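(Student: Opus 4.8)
The plan is to read Corollary~\ref{cor:quad} off Lemma~\ref{lem:lower} by feeding it the obvious parabolic minorant. Set $\psi\colon\R_+\to\R$ by $\psi(t):=f(\bar{x})-\frac{r}{2}t^2$. First I would verify the hypotheses of Lemma~\ref{lem:lower} for this choice: $\psi$ is a polynomial, hence semi-algebraic; since $f(\bar{x})$ is finite, $\psi$ indeed takes values in $\R$ on $\R_+$; trivially $\psi(0)=f(\bar{x})$; and the standing assumption of the corollary is exactly $f(x)\geq f(\bar{x})-\frac{r}{2}\|x-\bar{x}\|^2=\psi(\|x-\bar{x}\|)$ for all $x\in\B_{\varepsilon}(\bar{x})$.

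Next I would simply invoke Lemma~\ref{lem:lower}. It yields, for all $x$ near $\bar{x}$ (necessarily $x\neq\bar{x}$, as in the lemma) at which $\Gamma_{\partial f}(x;\bar{x}-x)$ is finite,
$$\frac{\psi'(\|x-\bar{x}\|)}{\|x-\bar{x}\|}\leq -\frac{1}{\Gamma_{\partial f}(x;\bar{x}-x)}.$$
Since $\psi'(t)=-rt$, the left-hand side is identically $-r$, so this inequality rearranges to $\Gamma_{\partial f}(x;\bar{x}-x)\geq\frac{1}{r}$. At the remaining nearby points $x\neq\bar{x}$ — those where $\partial f(x)=\emptyset$, or more generally where $\Gamma_{\partial f}(x;\bar{x}-x)=+\infty$ — the asserted bound holds vacuously. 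Combining the two cases gives the conclusion.

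I do not anticipate any genuine obstacle here: the argument is essentially a one-line specialization. The only points deserving a sentence of care are the check that the constant-coefficient quadratic $\psi$ is an admissible minorant in the precise sense required by Lemma~\ref{lem:lower}, and the convention — inherited from that lemma, whose displayed quotient already forces it — that the statement is read over a punctured neighborhood of $\bar{x}$.
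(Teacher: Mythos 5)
Your proposal is correct and is exactly the paper's argument: the paper's proof of Corollary~\ref{cor:quad} is the one-line instruction to apply Lemma~\ref{lem:lower} with $\psi(t)=f(\bar{x})-\frac{r}{2}t^2$, which you carry out in full, including the routine rearrangement $-r\leq -1/\Gamma_{\partial f}(x;\bar{x}-x)$ and the vacuous case where the gauge is infinite.
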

\begin{proof}	
Apply Lemma~\ref{lem:lower} to the function $\psi(t)=f(\bar{x})-\frac{r}{2}t^2$. \qed	
\end{proof}
	
We now arrive to the main theorem of this section. This result provides necessary conditions and sufficient conditions for minimality of nonsmooth functions purely in subdifferential terms.
As in the $C^2$-smooth case, there is a small gap between these two types of conditions. 

\begin{thm}[Necessary and sufficient conditions for optimality]\label{thm:nec_suff}{\hfill \\ }
Consider an lsc function $f\colon\R^n\to\overline{\R}$ that is finite at a point $\bar{x}$ satisfying $0\in\hat{\partial} f(\bar{x})$. 
Then the following are true:
\begin{enumerate}
\item\label{it:a} If $f$ is in addition semi-algebraic and subdifferentially continuous at $\bar{x}$ for $0$, then necessary conditions for $f$ to have a local minimum at $\bar{x}$ are that for every $r >0$, the following are true:
\begin{enumerate}
\item\label{it:strongsub} The set-valued mapping $\partial f+r(I-\bar{x})$ is strongly subregular at $(\bar{x},0)$.
\item There exists $\varepsilon >0$ such that 
$$\Gamma_{\partial f}(x;\bar{x}-x)\geq \frac{1}{r}\qquad\textrm{ for all } x\in\B_{\varepsilon}(\bar{x}).$$
\end{enumerate}
\item\label{it:b} If there are constants $\varepsilon >0$, $\lambda >0$, and $r \geq 0$ such that 
$$d(0; \partial f(x)+r(x-\bar{x}))\geq (\lambda+r)\|x-\bar{x}\|\quad \textrm{ and }\quad \Gamma_{\hat{\partial} f}(x;\bar{x}-x)\geq \frac{1}{2r}$$
\textrm{ for all } $x\in\B_{\varepsilon}(\bar{x})$,
then  $\bar{x}$ is a strong local minimizer of $f$.
\end{enumerate}
\end{thm}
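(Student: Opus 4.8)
The plan is to dispatch the two parts by reducing each to a result already established.

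\textit{Part~\ref{it:a}.} Fix $r>0$ and put $f_r:=f+\tfrac r2\|\cdot-\bar{x}\|^2$. Since $\bar{x}$ is a local minimizer of $f$ it is a strong local minimizer of $f_r$, which is again lsc and semi-algebraic, satisfies $0\in\partial f_r(\bar{x})$, and is subdifferentially continuous at $\bar{x}$ for $0$ (because $\partial f_r(x)=\partial f(x)+r(x-\bar{x})$ and $f$ has this property). Theorem~\ref{thm:mainres}, in the direction $2\Rightarrow1$, then shows that $\partial f_r=\partial f+r(I-\bar{x})$ is strongly subregular at $(\bar{x},0)$, which is \ref{it:strongsub}. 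The second necessary condition is immediate from Corollary~\ref{cor:quad}: local minimality gives $f(x)\ge f(\bar{x})\ge f(\bar{x})-\tfrac r2\|x-\bar{x}\|^2$ near $\bar{x}$, hence $\Gamma_{\partial f}(x;\bar{x}-x)\ge\tfrac1r$ near $\bar{x}$. (This half uses neither subdifferential continuity nor isolatedness of the minimizer.)

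\textit{Part~\ref{it:b}.} Assume $\bar{x}=0$ and $f(\bar{x})=0$; the argument has two steps. \emph{Step 1: $\bar{x}$ is a local minimizer of $f$.} Fix a $C^\infty$ function $\psi\colon[0,\varepsilon)\to\R$ with $\psi(0)=0$, $\psi'>0$ on $(0,\varepsilon)$, and $\psi(t)\to+\infty$ as $t\uparrow\varepsilon$ (say $\psi(t)=t^2/(\varepsilon^2-t^2)$), and for $\eta>0$ consider $g_\eta(x):=f(x)+\eta\psi(\|x\|)$ on $\{\,\|x\|<\varepsilon\,\}$. As $f$ is lsc, hence bounded below on $\{\|x\|\le\varepsilon\}$, while $\eta\psi(\|x\|)\to+\infty$ when $\|x\|\uparrow\varepsilon$, the infimum of $g_\eta$ over the open ball is attained at some interior point $z$, with $g_\eta(z)\le g_\eta(0)=0$. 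If $z\ne0$, then $z$ is an unconstrained local minimizer of $g_\eta$, and since $x\mapsto\eta\psi(\|x\|)$ is smooth near $z$ the exact sum rule gives $\mu(\bar{x}-z)\in\hat{\partial}f(z)$ with $\mu:=\eta\psi'(\|z\|)/\|z\|>0$. We now use both hypotheses at $z$ (which lies in $\B_\varepsilon(\bar{x})$): from $\mu(\bar{x}-z)\in\partial f(z)$ we obtain $(r-\mu)(z-\bar{x})\in\partial f(z)+r(z-\bar{x})$, so $|r-\mu|\,\|z\|\ge d\bigl(0;\partial f(z)+r(z-\bar{x})\bigr)\ge(\lambda+r)\|z\|$, which (as $\lambda>0$) forces $\mu\ge\lambda+2r$; meanwhile $\mu(\bar{x}-z)\in\hat{\partial}f(z)$ gives $\Gamma_{\hat{\partial}f}(z;\bar{x}-z)\le1/\mu$, so the second hypothesis gives $\mu\le 2r$. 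This is a contradiction, so $z=0$, whence $f(x)\ge-\eta\psi(\|x\|)$ for all $\|x\|<\varepsilon$; letting $\eta\downarrow0$ (with $x$ and $\varepsilon$ fixed) yields $f(x)\ge0=f(\bar{x})$ throughout the ball.

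\emph{Step 2: upgrade to a strong minimizer.} Let $h:=f+\tfrac r2\|\cdot-\bar{x}\|^2$. By Step 1, $\bar{x}$ is a local minimizer of the lsc function $h$, and $\partial h=\partial f+r(I-\bar{x})$; so the first hypothesis says exactly that $\partial h$ is strongly metrically subregular at $(\bar{x},0)$ with modulus $(\lambda+r)^{-1}$, hence $(\partial h)^{-1}(0)=\{\bar{x}\}$ near $\bar{x}$ and $\partial h$ is metrically subregular there with that modulus. Theorem~\ref{thm:gen} applied to $h$ then gives, for each $\alpha\in(0,\lambda+r)$ and all $x$ near $\bar{x}$, $h(x)\ge h(\bar{x})+\tfrac\alpha2\|x-\bar{x}\|^2$, that is $f(x)\ge f(\bar{x})+\tfrac{\alpha-r}{2}\|x-\bar{x}\|^2$; choosing $\alpha\in(r,\lambda+r)$ (nonempty since $\lambda>0$) completes the proof.

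I expect the substantive point to be Step~1, and in particular the choice of perturbation. The second hypothesis is phrased with the \emph{regular} subdifferential and only constrains subgradients that are honest positive multiples of $\bar{x}-x$; minimizing $f$ over a sphere or a closed ball, or running a plain Ekeland argument, produces only limiting subgradients (or ones merely close to a radial direction, or none at all at a boundary minimizer), which cannot be fed into that hypothesis. Using a smooth, strictly increasing radial penalty that blows up on $\{\|x-\bar{x}\|=\varepsilon\}$ resolves this: the blow-up pushes the minimizer into the interior, so no constraint qualification is needed, and smoothness of the penalty away from $\bar{x}$ makes Fermat's rule split exactly, producing a bona fide positive radial regular subgradient to which \emph{both} hypotheses apply --- at which point they collide because $\lambda>0$. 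The remaining verifications are routine: lower semicontinuity and boundary coercivity of $g_\eta$, the exact sum rule for adding a smooth function, and the fact that strong metric subregularity isolates $(\partial h)^{-1}(0)$ at $\bar{x}$.
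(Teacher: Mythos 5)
Your proof is correct, and it is worth recording how it relates to the paper's argument. Part~\ref{it:a} is exactly what the paper leaves implicit: perturb by $\frac{r}{2}\|\cdot-\bar{x}\|^2$, note that $\bar{x}$ becomes a strong local minimizer of the (still lsc, semi-algebraic, subdifferentially continuous) perturbed function, and invoke Theorem~\ref{thm:mainres} ($2\Rightarrow1$) together with Corollary~\ref{cor:quad}. Part~\ref{it:b} shares the paper's overall architecture --- first show that $\bar{x}$ is a local minimizer by playing the two hypotheses against each other at a suitable point carrying a positive radial regular subgradient, then upgrade to quadratic growth via Theorem~\ref{thm:gen} --- but your execution of the first step is genuinely different. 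The paper works with the radial marginal function $\rho(t)=\inf\{f(x):\|x\|\le t\}$, uses its monotonicity and almost-everywhere differentiability to locate $\tau$ with a Fr\'echet subgradient $\alpha\in(-r,0]$ of $\rho$, and then transfers this to a radial regular subgradient of $f$ at a minimizer on the sphere $\|x\|=\tau$ by means of the diffeomorphism $\Psi$ and an external result on regular subgradients of marginal functions. Your smooth radial barrier $\eta\,\psi(\|x-\bar{x}\|)$ blowing up at the boundary of $\B_{\varepsilon}(\bar{x})$ reaches the same configuration more directly: the penalized minimizer $z$ is forced into the interior, the exact sum rule for a smooth addend yields an honest inclusion $\mu(\bar{x}-z)\in\hat{\partial}f(z)$ with $\mu>0$, and the two hypotheses then force $\mu\ge\lambda+2r$ and $\mu\le 2r$ simultaneously --- a contradiction since $\lambda>0$. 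This dispenses with the marginal-function machinery, the diffeomorphism, and the outside citation, at no cost in generality; your justification of why a plain sphere-constrained minimization or Ekeland argument would not produce a usable regular subgradient is exactly the right diagnosis. The only cosmetic divergence in the final step is that you apply Theorem~\ref{thm:gen} to $h=f+\frac{r}{2}\|\cdot-\bar{x}\|^2$ and subtract the quadratic afterwards, whereas the paper observes that the first hypothesis already gives $d(0;\partial f(x))\ge\lambda\|x-\bar{x}\|$ and applies Theorem~\ref{thm:gen} to $f$ itself; both routes deliver growth constants throughout $(0,\lambda)$.
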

\begin{proof}
Claim \ref{it:a} is immediate from Theorem~\ref{thm:mainres} and Corollary~\ref{cor:quad}. We now prove claim \ref{it:b}. We assume without loss of generality $\bar{x} = 0$ and $f(\bar{x}) = 0$. Observe 
$$d(0; \partial f(x))+r\|x\|\geq d(0; \partial f(x)+rx)\geq (\lambda+r)\|x\|,$$
and hence $\partial f$ is strongly metrically subregular at $(\bar{x},0)$.
Hence by Theorem 4.3, it is sufficient now to show that $f$ attains a local minimum at $\bar{x}$. Consider
the function $\rho(t) =\inf\{f(x): \|x\|\leq t\}$. Clearly, $\rho$ is lsc and non-increasing and
$\rho(0) = 0$. If $\rho(t) = 0$ for some positive $t$, we are done. So we assume by way of
contradiction that $\rho(t) < 0$ for all small $t >0$.

Note $\rho(t) = o(t)$ since we have $0\in\hat{\partial} f(\bar{x})$. 
As $\rho$ is non-increasing, it is almost
everywhere differentiable. 
Consequently $\ls_{t\to 0} \rho'(t)=0$. 
In particular, we may choose $\tau \in (0,\varepsilon)$ and  
$\alpha\in \hat{\partial} \rho(\tau)$ with $-r < \alpha \leq 0$. Let $x$ be such that $f(x) = \rho(\tau)$ and $\|x\|\leq \tau$.
In the case $\tau':=\|x\|<\tau$, the function $\rho$ is constant on $[\tau',\tau]$ and therefore $0\in\hat{\partial} f(\tau')$. We may then replace $\tau$ with $\tau'$ and $\alpha$ with zero to in any case ensure $\|x\|= \tau$.
Finally define the unit vector $u:=\frac{1}{\tau}x$.

Let $Y$ be the subspace orthogonal to $u$ and consider the following diffeomorphism of a neighborhood of $(\tau,0)\in\R\times Y$ onto a neighborhood of $x$: 
$$\Psi(\zeta,y)=\sqrt{\zeta^2-\|y\|^2}\,u+y.$$
Observe $\Psi(\tau,0)=x$. Moreover we have $\|\Psi(\zeta,y)\|=t$ if and only if $\zeta=t$.
Set now $$g:=f\circ\Psi\quad\textrm{ and }\quad \eta(t):=\min_y g(t,y).$$
Observe $\eta(\tau)=\rho(\tau)$ and $\eta(t)\geq \rho(t)$.
We deduce the inclusion $\alpha\in \hat{\partial} \eta(\tau)$.
By \cite[Proposition~3.1]{IP}, this implies $(\alpha,0)\in\hat{\partial} g(\tau,0)$. Standard calculus, in turn, yields
$$\hat{\partial} g(\tau,0)\subset (\nabla \Psi (\tau,0))^* \hat{\partial} f(x).$$
It follows that there is a vector $x^*\in\hat{\partial} f(x)$ satisfying $\langle x^*,u\rangle=\alpha \leq 0$ and whose projection onto $Y$ is zero. We deduce $x^*=\alpha u$ and consequently 
$$d(0;\partial f(x)+rx)\leq \|x^*+rx\|=\Big|\frac{\alpha}{\|x\|}+r\Big|\cdot\|x\|.$$
Note now the inclusion $\frac{\alpha}{\|x\|}x\in \hat{\partial} f(x)$. Appealing  to the  assumed inequality $\Gamma_{\hat{\partial} f}(x;-x)\geq \frac{1}{2r}$, we deduce $\Big|\frac{\alpha}{\|x\|}+r\Big|\leq r$.
Thus we have arrived at a contradiction $d(0;\partial f(x)+rx)\leq r\|x\|$, which completes the proof. \qed
\end{proof}

It is worth emphasizing that unlike the standard second order conditions, the previous theorem does not even require the existence of second derivatives.


\begin{acknowledgements}
We thank the anonymous referees for their careful reading and insightful comments, which have improved the quality of the paper. 
Much of the current manuscript was written while the first author was visiting the ECE department at Northwestern University. The first author thanks the department, and Jorge Nocedal in particular, for the hospitality. 
\end{acknowledgements}



\bibliographystyle{plain}
\small
\parsep 0pt
\bibliography{bibliography}

\end{document}